\def\kk{{\mathcal K}}
\def\ltwo{{L^2(\mathbb{R})}}
\def\ffi{\varphi}
\def\dst{\displaystyle}
\def\N{{\mathbb{N}}}
\def\R{{\mathbb{R}}}
\def\Z{{\mathbb{Z}}}
\newcommand{\norm}[1]{{\left\|{#1}\right\|}}
\newcommand{\abs}[1]{{\left|{#1}\right|}}
\newcommand{\scal}[1]{{\left\langle{#1}\right\rangle}}
\newtheorem{lemma}{Lemma}[section]
\newtheorem{proposition}[lemma]{Proposition}
\newtheorem{theorem}[lemma]{Theorem}
\newtheorem{corollary}[lemma]{Corollary}
\newtheorem{examplenum}[lemma]{Example}
\begin{document}

\title{Time-frequency concentration of generating systems}
\author{Philippe Jaming}
\address{Universit\'e d'Orl\'eans\\
Facult\'e des Sciences\\ 
MAPMO\\ BP 6759\\ F 45067 Orl\'eans Cedex 2\\
France}
\email{Philippe.Jaming@univ-orleans.fr}

\author{Alexander M. Powell}
\address{Vanderbilt University\\
Department of Mathematics\\
Nashville, TN 37240\\
USA}
\email{alexander.m.powell@vanderbilt.edu}

\begin{abstract}
Uncertainty principles for generating systems $\{e_n\}_{n=1}^{\infty} \subset \ltwo$ are proven and quantify the interplay
between $\ell^r(\N)$ coefficient stability properties and
time-frequency localization with respect to $|t|^p$
power weight dispersions.
As a sample result, it is proven that there does not exist a unit-norm Schauder basis nor a frame
%Philippe added nor a frame
$\{e_n\}_{n=1}^{\infty}$ for $\ltwo$ such that the two dispersion sequences
$\Delta(e_n)$, $\Delta(\widehat{e_n})$ and one mean sequence $\mu(e_n)$ are bounded.
On the other hand, it is constructively proven that there exists a
unit-norm exact system $\{f_n\}_{n=1}^{\infty}$ in $\ltwo$ for which all four of the
sequences
$\Delta(f_n)$, $\Delta(\widehat{f_n})$, $\mu(f_n)$, $\mu(\widehat{f_n})$
are bounded.
\end{abstract}

\subjclass{}

\keywords{Compactness, exact system, frame, Schauder basis, time-frequency concentration,
uncertainty principle.}

\date{\today}

\thanks{}
\maketitle

%%%\tableofcontents

\section{Introduction} \label{intro:sec}

The uncertainty principle in harmonic analysis states that a function $f$ and its
Fourier transform $\widehat{f}(\xi) = \int f(t) e^{-2\pi i t \xi},\mbox{d}t$ cannot be simultaneously
too well localized.  Heisenberg's uncertainty principle offers a classical interpretation of this
in terms of means and dispersions.  Heisenberg's inequality 
states that for every 
$f \in \ltwo$ with unit-norm $\|f\|_{2}=1$
$$
\Delta(f) \Delta(\widehat{f}) \geq \frac{1}{4\pi}.
$$
For unit-norm $f \in \ltwo$ the {\em mean} $\mu( \cdot )$ is defined by
$\mu(f) = \int t |f(t)|^2 \,\mbox{d}t$
and the {\em dispersion} $\Delta( \cdot )$ is defined by 
$\Delta(f) = (\int |t - \mu(f)|^2 |f(t)|^2 \,\mbox{d}t)^{1/2}$.  
%Philippe: exchanged definitions of mean and dispersion
It is common to say that $f$ is mostly concentrated in the time-frequency plane in
a Heisenberg box centered at $\bigl(\mu(f),\mu(\widehat{f}\,)\bigr)$ with side lengths determined by
$\Delta(f)$ and $\Delta(\widehat{f}\,)$. Heisenberg's inequality states that this box has area at least $1/4\pi$.
For a survey on uncertainty principles, see \cite{FS}.

Heisenberg's uncertainty principle applies to individual functions $f\in\ltwo$.  However, there are also
versions of the  uncertainty principle that constrain the collective time-frequency localization
properties of systems of functions $\{e_n\}_{n=1}^{\infty} \subset \ltwo$ such as  orthonormal bases.
For example, the Balian-Low uncertainty principle, ({\it see e.g.} \cite{BHW}),
states that if the Gabor system 
$$
\mathcal{G}(f,1,1)=\{f_{m,n}\}_{m,n \in \Z} = \{e^{2\pi i m t} f(t-n)\}_{m,n\in\Z}
$$
is an orthonormal basis for $\ltwo$ then the strong uncertainty constraint
$\Delta(f) \Delta(\widehat{f})=\infty$ must hold. The Balian-Low theorem depends crucially on the rigid
structure of Gabor systems.

This article addresses uncertainty principles for general 
generating systems $\{e_n\}_{n=1}^{\infty}$ for $\ltwo$.  Our main results are motivated by a question
of H.S. Shapiro, \cite{Sh}, that asks to characterize the sequences $\Delta(e_n), \Delta(\widehat{e_n}),
\mu(e_n), \mu(\widehat{e_n})$ that arise for orthonormal bases $\{e_n\}_{n=1}^{\infty}$ for $\ltwo$.
The following uncertainty principle for this setting was proven in \cite{Po}:
\begin{eqnarray} 
&&\hbox{\sl If $\{e_n\}_{n=1}^{\infty}$ is an orthonormal basis for $\ltwo$ then
the three}  \label{mvonb} \\ 
&&\hbox{\sl sequences $\mu(e_n), \Delta(e_n), \Delta(\widehat{e_n})$ cannot all be bounded.} \notag
\end{eqnarray}
Further results of this type may be found in \cite{JP,Ma}.%Philippe: sentence added
A main goal of this article is to pursue the investigation of this phenomenon
for more general generating systems in $\ltwo$ (such as Schauder bases, frames, or exact systems)
and for generalized dispersions involving a $|t|^p$ power weight (instead of the standard
$|t|^2$ weight).%Philippe: was an $|t|^p$, is that OK ?

\subsection*{Overview and main results}
The paper is organized as follows.  Section \ref{background:sec} gives background on
generating systems and generalized power weight dispersions.
Section \ref{compact:sec} discusses the role of compactness in time-frequency concentration results
and contains useful lemmas.

Our first main result, Theorem \ref{mv-mainthm}, is 
proven in Section \ref{mvmain:sec}.  Roughly stated, Theorem \ref{mv-mainthm} shows
 that if a unit-norm system $\{e_n\}_{n=1}^{\infty} \subset \ltwo$ satisfies
$$
\forall f \in \ltwo, \ \ \ A\left( \sum_{n=1}^{\infty} | \langle f, e_n \rangle |^s \right)^{1/s}
\leq \| f \|_{2} \leq B \left( \sum_{n=1}^{\infty} | \langle f, e_n \rangle |^r \right)^{1/r},
$$
then for $qr>2$ (with $p,q>1$, $r,s>0$)
the generalized means and dispersions
$\mu_p(e_n)$, $\Delta_p(e_n)$, $\Delta_q(\widehat{e_n})$
cannot all be bounded.
This improves the dispersion requirements in \cite{Po} and extends
beyond orthonormal bases. 
For example, if $\{e_n\}_{n=1}^{\infty}$ is a Schauder basis for $\ltwo$
 then the standard means and dispersions
 $\mu(e_n), \Delta(e_n), \Delta(\widehat{e_n})$ cannot all be bounded.

Our second main result, Theorem \ref{exact-thm}, appears in 
Section \ref{exact:sec} and shows that the phenomenon in Theorem \ref{mv-mainthm} is not true for
exact systems.  In particular, Theorem \ref{exact-thm} constructively proves that for any
$p,q>1$ there exists
a unit-norm exact system $\{e_n\}_{n=1}^{\infty}$ for $\ltwo$
such that $\Delta_p(e_n)$, $\mu_p(e_n)$, $\Delta_q(\widehat{e_n})$ and $\mu_q(\widehat{e_n})$ are all bounded sequences.

\section{Background} \label{background:sec}

\subsection{Generating systems}
Throughout this section let $H$ be a separable Hilbert space with norm $\| \cdot \|$.
The collection $\{e_n\}_{n=1}^{\infty} \subset H$ is a {\em frame} for $H$ if
there exist constants $0<A,B<\infty$ such that
\begin{equation} \label{frame-ineq}
\forall f \in H, \ \ \ A\|f\|^2 \leq \sum_{n=1}^{\infty} | \langle f, e_n \rangle |^2 
\leq B\|f\|^2.
\end{equation}
A standard result in frame theory states that there exists an associated {\em dual frame}
$\{\widetilde{e_n}\}_{n=1}^{\infty} \subset H$ such that
the following expansions hold with unconditional convergence in $H$
$$
\forall f \in H, \ \ \ f = \sum_{n=1}^{\infty} \langle f, e_n \rangle \widetilde{e_n}
=\sum_{n=1}^{\infty} \langle f,  \widetilde{e_n} \rangle{e_n}.
$$
In general, frames need neither be orthogonal nor provide unique representations.
For example, a union of $k$ orthonormal bases for $H$ is a frame for $H$.  
For further background on frame theory see \cite{Ca,Ch}.

The system $\{e_n\}_{n=1}^{\infty}\subset H$ is {\em minimal} if for every $N \in \N,$
$e_N \notin \overline{span}\{ e_n : n \neq N \}$.  
If $\{e_n\}_{n=1}^{\infty}$ is a minimal system and is a frame for $H$, then we say
that $\{e_n\}_{n=1}^{\infty}$ is a {\em Riesz basis} for $H$.
If $\{e_n\}_{n=1}^{\infty}$ is minimal and is complete in $H$ then we say that
$\{e_n\}_{n=1}^{\infty}$ is an {\em exact} system in $H$.

The system $\{e_n\}_{n=1}^{\infty}$ is a {\em Schauder basis} for $H$ 
if for every $f \in H$ there exists a unique sequence $\{c_n\}_{n=1}^{\infty} \subset \mathbb{C}$
such that
$$
\lim_{N\to \infty} \norm{f - \sum_{n=1}^N c_n e_n} =0.$$
An important result of Gurari\u{\i} and Gurari\u{\i}, see \cite{GG, Ru}, states that if $\{e_n\}_{n=1}^{\infty}$ with $\|e_n\|=1$ is a Schauder
basis for $H$ then there exist $1<r \leq 2 \leq s < \infty$ and constants $0<A, B<\infty$
such that
\begin{equation} \label{gg-cond}
\forall f \in H, \ \ \ 
A \left( \sum_{n=1}^{\infty} | \langle f, e_n \rangle |^s \right)^{1/s}
\leq \|f\| \leq 
B \left( \sum_{n=1}^{\infty} | \langle f, e_n \rangle |^r \right)^{1/r}.
\end{equation}

We shall primarily be interested in the cases where $H=\ltwo$ or where
$H = \overline{span} \{e_n\}_{n=1}^{\infty}$ is the closed linear span of a
collection $\{e_n\}_{n=1}^{\infty} \subset \ltwo$.
Since we shall investigate the applicability of (\ref{mvonb}) to systems
such as frames, Schauder bases, and exact systems, it is useful to note the
following inclusions among different types of generating systems in $\ltwo$:
$$\{\rm orthonormal \ bases \} \subsetneq \{\rm Riesz \ bases\} \subsetneq \{\rm Schauder \ bases \} \subsetneq  \{ {\rm exact \ systems} \},$$
$$ \{\rm Riesz \ bases\} \subsetneq \{\rm frames\}; \ \
\{\rm frames \} \nsubseteq \{\rm Schauder\ bases\}; \ \ 
\{\rm Schauder \ bases \} \nsubseteq \{\rm frames \}.$$
The condition (\ref{gg-cond}) will serve as a key feature in our analysis.  In particular,
(\ref{gg-cond}) holds for all Schauder bases, frames, Riesz bases and orthonormal bases,
but does not in general hold for exact systems.  See \cite{You} for additional information
on generating systems.

\subsection{Generalized means and dispersions}
The standard dispersion defined by
\begin{equation} \label{standard-disp-mean}
\Delta(h) = \left( \int |t - \mu(h)|^2 |h(t)|^2\,\mbox{d}t \right)^{1/2} \ \ \ \hbox{ and } \ \ \ \mu(h) = \int t |h(t)|^2\,\mbox{d}t,
\end{equation}
implicitly makes use of the $|t|^2$ power weight.  In this note we shall consider a generalized
class of dispersions and associated means defined in terms of the $|t|^p$ power weight
with $p>1$.

Let $p>1$. Recall that a function $\ffi$ is strictly convex if 
for every $a \neq b$ and $0< s < 1$ there holds
$\varphi(sa + (1-s)b) < s\varphi(a) + (1-s)\varphi(b).$
A property of convexity states that if $\varphi$ is strictly convex and attains a local minimum
then the local minimum is global and is {\em unique}.

Given $p>1$ the function $a \mapsto |a|^p$ is a standard example of a strictly convex function.
Consequently, for any fixed $t\in\R$, $a\mapsto|t-a|^p$ is also strictly convex.
Now fix $h\in L^2(\R)$ with $\|h\|_{2}=1$, and define
$$
\varphi(a)=\int |t - a|^p |h(t)|^2\,\mbox{d}t.
$$
Since $|a+b|^p=2^p\abs{\frac{1}{2}a+\frac{1}{2}b}^p\leq 2^{p-1}(|a|^p+|b|^p)$, we have that
if $\ffi(a)<\infty$
for some $a\in\R$, then $\varphi(a)<\infty$ for every $a\in\R$. 
Moreover, it may be verified that if $\varphi(a)<\infty$ for some $a\in\R$ then
$\varphi$ is continuous and strictly convex.

Assume that $\|h\|_2=1$ and $\int |t|^p |h(t)|^2 dt < \infty$, so that
$\varphi(a)<\infty$ for every $a\in\R$. 
Then
\begin{eqnarray*}
\varphi(a) + \int |t|^p |h(t)|^2 \,\mbox{d}t
= \int |t|^p |h(t)|^2 \,\mbox{d}t + \int |a-t|^p|h(t)|^2 \,\mbox{d}t \\
\geq 2^{1-p} \int |a|^p|h(t)|^2  \,\mbox{d}t = 2^{1-p}|a|^p
\end{eqnarray*}
which implies that $\lim_{|a| \to \infty} \varphi(a) = \infty$.  Thus, since $\varphi$ is continuous
and strictly convex it follows that $\ffi$ has a unique global minimum.
Given $p>1$ and $\|h\|_2=1$ we may thus define
$$
\Delta_p(h) = \min_{a \in \R} \left(\int |t - a|^p |h(t)|^2 \,\mbox{d}t \right)^{1/2},
$$
and
$$
\mu_p(h) = {\rm arg \thinspace min}_{a\in \R} \left(\int |t - a|^p |h(t)|^2 \,\mbox{d}t \right)^{1/2}.
$$
Note that $\mu_p(h)$ is uniquely defined.
We refer to $\Delta_p(h)$ as the {\em $p$-dispersion} of $h$ and $\mu_p(h)$ as the {\em $p$-mean}
of $h$. 
When $p=2$ it is straightforward to verify that the closed form expressions
$\mu_2(f) = \mu(f)$ and $\Delta_2(f) = \Delta(f)$ from (\ref{standard-disp-mean}) hold.

If $p\leq 1$ then strict convexity no longer holds and the
$p$-mean need not be uniquely defined.  For example, if
$h=2^{-1/2}(\chi_{[-2,-1]}+\chi_{[1,2]})$ then every
$a \in [-1,1]$ is a minimizer for $\dst\varphi(a) = \int |t-a| |h(t)|^2 \,\mbox{d}t$.
We will therefore restrict our attention to $p>1$.

We now state some useful results on $p$-means and $p$-dispersions.
\begin{lemma} \label{lemma:basicdisp}
Let $p>1$.  Suppose that $h \in \ltwo$ satisfies $\|h\|_2=1$ and
that $\Delta_p(h)$ is finite (and hence $\mu_p(h)$ is also finite).
If $h_s$ is defined by $h_s(t)=h(t-s)$, then $\Delta_p(h_s)=\Delta_p(h)$ and 
$\mu_p(h_s)=\mu_p(h)+s$. Moreover, if $h$ is even or odd, then $\mu_p(h)=0$. 
\end{lemma}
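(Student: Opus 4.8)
The plan is to verify each claim directly from the definitions of $\Delta_p$ and $\mu_p$ as the minimum and minimizer of $\varphi_h(a) = \int |t-a|^p |h(t)|^2\,\mathrm{d}t$. First I would handle the translation invariance. For $h_s(t) = h(t-s)$ note that $\|h_s\|_2 = \|h\|_2 = 1$, so $\mu_p(h_s)$ and $\Delta_p(h_s)$ are well defined by the preceding discussion. I would compute the associated functional $\varphi_{h_s}(a) = \int |t-a|^p |h(t-s)|^2\,\mathrm{d}t$ and apply the change of variables $u = t-s$ to obtain
\begin{equation*}
\varphi_{h_s}(a) = \int |u - (a-s)|^p |h(u)|^2\,\mathrm{d}u = \varphi_h(a-s).
\end{equation*}
Thus $\varphi_{h_s}$ is just a horizontal shift of $\varphi_h$ by $s$, which immediately forces the minimum values to coincide, $\Delta_p(h_s) = \Delta_p(h)$, and the minimizers to satisfy $\mu_p(h_s) = \mu_p(h) + s$. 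Because $\varphi_h$ has a \emph{unique} global minimizer (strict convexity, as established in the excerpt), there is no ambiguity in this identification.

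For the parity statement I would exploit the symmetry of $\varphi_h$. If $h$ is even or odd, then $|h(t)|^2$ is an even function of $t$. I would then show that $\varphi_h$ itself is even: substituting $t \mapsto -t$ in the integral defining $\varphi_h(-a)$ gives
\begin{equation*}
\varphi_h(-a) = \int |t+a|^p |h(t)|^2\,\mathrm{d}t = \int |{-u}+a|^p |h(-u)|^2\,\mathrm{d}u = \int |u-a|^p |h(u)|^2\,\mathrm{d}u = \varphi_h(a),
\end{equation*}
where I used $|h(-u)|^2 = |h(u)|^2$ and $|{-u}+a|^p = |u-a|^p$. Hence $\varphi_h$ is even. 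Since $\varphi_h$ attains its global minimum at the unique point $\mu_p(h)$, and evenness means $-\mu_p(h)$ is also a minimizer, uniqueness of the minimizer forces $\mu_p(h) = -\mu_p(h)$, i.e. $\mu_p(h) = 0$.

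This argument is essentially a bookkeeping exercise once the uniqueness of the minimizer is in hand, so I do not anticipate a serious obstacle. The only point requiring care is ensuring that all quantities are finite and that the minimizers are genuinely unique: both of these follow from the hypothesis that $\Delta_p(h)$ is finite (equivalently $\int |t|^p|h(t)|^2\,\mathrm{d}t < \infty$, as discussed in the excerpt) together with the strict convexity of $\varphi_h$. The uniqueness is what transforms the two symmetry observations — a translate of $\varphi_h$ and a reflection of $\varphi_h$ — from statements about \emph{some} minimizer into statements about \emph{the} mean $\mu_p$, and it is the crux that makes the clean identities $\mu_p(h_s) = \mu_p(h) + s$ and $\mu_p(h) = 0$ hold exactly.
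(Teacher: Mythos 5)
Your proof is correct and follows essentially the same route as the paper: the translation claims come from the identity $\varphi_{h_s}(a)=\varphi_h(a-s)$ (which the paper simply declares ``clear''), and the parity claim from the evenness of $\varphi_h$ combined with the uniqueness of the minimizer guaranteed by strict convexity. Your write-up merely spells out the change-of-variables details more explicitly than the paper does.
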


\begin{proof}
It is clear that
$\Delta_p(h_s)=\Delta_p(h)$ and $\mu_p(h_s)=\mu_p(h)+s$.
Next assume that $h$ is either even or odd, so that $|h|^2$ is even.
The change of variable $t \mapsto -t$ gives
$$
\int |t - \mu_p(h)|^p |h(t)|^2 \,\mbox{d}t = \int |t + \mu_p(h)|^p |h(t)|^2 \,\mbox{d}t.
$$
Since $\mu_p(h)$ is the unique minimizer of $\varphi(a) = \int |t-a|^p |h(t)|^2 \,\mbox{d}t$
we have $\mu_p(h) = -\mu_p(h)$.  Thus, $\mu_p(h) =0$.
\end{proof}

In general, convergence in $\ltwo$ does not imply convergence of $p$-dispersions.
For example, if we define $\psi_s  = (1-s)^{1/2}\chi_{[-1/2,1/2]} + s^{1/2} \chi_{[s^{-4},1+s^{-4}]}$
and $\psi = \chi_{[-1/2,1/2]},$
then $\lim_{s\to 0} \psi_s = \psi$ holds in $\ltwo$, but
$\lim_{s\to 0} \mu_2(\psi_s) = \infty \neq 0=\mu_2(\psi)$
and
$\lim_{s\to 0} \Delta_2(\psi_s) = \infty \neq \Delta_2(\psi)$.
However, the following holds.

\begin{proposition} \label{prop:dispconv}
Let $p>1$. 
Suppose that $f,g\in \ltwo$ satisfy $\|f\|_{2}=\|g\|_{2}=1$ and that $\Delta_p(f)$ and $\Delta_p(g)$
are finite. 
For each $|\alpha|<1$ define
$\dst h_{\alpha} = \frac{f+ \alpha g}{ \| f + \alpha g \|_2}.$
Then
$$\lim_{\alpha \to 0} \Delta_p(h_\alpha) = \Delta_p(f) \ \ \ \hbox{ and } \ \ \
\lim_{\alpha \to 0 } \mu_p(h_{\alpha}) = \mu_p (f).$$
\end{proposition}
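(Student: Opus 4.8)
The plan is to reduce everything to the family of convex functions
$$\varphi_\alpha(a) = \int |t-a|^p |h_\alpha(t)|^2\,\mbox{d}t = \frac{1}{\|f+\alpha g\|_2^2}\int |t-a|^p |f(t)+\alpha g(t)|^2\,\mbox{d}t$$
and its candidate limit $\varphi(a) = \int |t-a|^p|f(t)|^2\,\mbox{d}t$, recalling that $\Delta_p(h_\alpha)^2 = \min_a \varphi_\alpha(a)$ is attained at the unique point $\mu_p(h_\alpha)$, and likewise $\Delta_p(f)^2 = \varphi(\mu_p(f))$. Since $\|f+\alpha g\|_2 \geq 1 - |\alpha| > 0$ the functions $h_\alpha$ are well defined, and $\|f+\alpha g\|_2^2 \to 1$ as $\alpha \to 0$, so for small $\alpha$ we may assume $\|f+\alpha g\|_2^2 \geq 1/2$. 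The elementary bound $|f + \alpha g|^2 \leq 2(|f|^2 + |g|^2)$, valid for $|\alpha|<1$, together with the hypothesis that $\Delta_p(f)$ and $\Delta_p(g)$ are finite (so that $\int|t|^p|f|^2\,\mbox{d}t$ and $\int|t|^p|g|^2\,\mbox{d}t$ are finite), supplies a single integrable majorant independent of $\alpha$. This domination is exactly what the pathological example $\psi_s$ lacks, and it is the feature that lets $L^2$-convergence be upgraded to convergence of the $p$-dispersions.

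First I would establish a uniform coercivity estimate. Using the inequality $\varphi_\alpha(a) + \int |t|^p|h_\alpha|^2\,\mbox{d}t \geq 2^{1-p}|a|^p$ from the discussion preceding the statement, and bounding $\int|t|^p|h_\alpha|^2\,\mbox{d}t \leq 4\int|t|^p(|f|^2+|g|^2)\,\mbox{d}t =: C$ via the majorant and $\|f+\alpha g\|_2^2 \geq 1/2$, one obtains $\varphi_\alpha(a) \geq 2^{1-p}|a|^p - C$ uniformly in small $\alpha$. Since also $\varphi_\alpha(\mu_p(h_\alpha)) \leq \varphi_\alpha(0) = \int|t|^p|h_\alpha|^2\,\mbox{d}t \leq C$, it follows that $2^{1-p}|\mu_p(h_\alpha)|^p \leq 2C$, so the minimizers satisfy $|\mu_p(h_\alpha)| \leq R$ for a fixed $R$ and all sufficiently small $\alpha$. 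Thus the means cannot escape to infinity.

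The second ingredient is a continuity statement: if $\alpha_k \to 0$ and $a_k \to b$ with $a_k \in [-R,R]$, then $\varphi_{\alpha_k}(a_k) \to \varphi(b)$. This is dominated convergence, since the integrand $|t-a_k|^p|f+\alpha_k g|^2$ converges pointwise to $|t-b|^p|f|^2$ and is bounded by $2^p(|t|^p + R^p)(|f|^2+|g|^2)$, an integrable function independent of $k$; one then divides by $\|f+\alpha_k g\|_2^2 \to 1$.

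Finally I would combine these by a subsequence argument. Suppose $\mu_p(h_\alpha) \not\to \mu_p(f)$. By the uniform bound there is a sequence $\alpha_k \to 0$ with $\mu_p(h_{\alpha_k}) \to b \neq \mu_p(f)$. The minimality inequality $\varphi_{\alpha_k}(\mu_p(h_{\alpha_k})) \leq \varphi_{\alpha_k}(\mu_p(f))$ passes to the limit by the continuity statement (taking $a_k = \mu_p(h_{\alpha_k})$ on the left and the constant sequence $\mu_p(f)$ on the right), yielding $\varphi(b) \leq \varphi(\mu_p(f))$, which contradicts the uniqueness of the minimizer of the strictly convex $\varphi$. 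Hence $\mu_p(h_\alpha) \to \mu_p(f)$, and applying the continuity statement once more with $b = \mu_p(f)$ gives $\Delta_p(h_\alpha)^2 = \varphi_\alpha(\mu_p(h_\alpha)) \to \varphi(\mu_p(f)) = \Delta_p(f)^2$. The main obstacle is ruling out leakage of the mass of $|h_\alpha|^2$ to infinity, which is precisely the content of the uniform coercivity estimate; once the minimizers are confined to a fixed compact set, the remainder is routine dominated-convergence bookkeeping.
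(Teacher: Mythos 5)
Your argument is correct and complete, but it takes a genuinely different route from the paper's. The paper proves the two inequalities $\limsup_{\alpha\to0}\Delta_p^2(h_\alpha)\leq\Delta_p^2(f)$ and $\Delta_p^2(f)\leq\liminf_{\alpha\to0}\Delta_p^2(h_\alpha)$ separately via Minkowski-type estimates, and controls the means by a mass-concentration argument: Chebyshev's inequality shows that at least $3/4$ of the mass of $|f|^2$ lies within $\eta=[4\Delta_p^2(f)]^{1/p}$ of $\mu_p(f)$ while at least $1/2$ of the mass of $|h_\alpha|^2$ lies within $\eta$ of $\mu_p(h_\alpha)$, forcing $|\mu_p(h_\alpha)-\mu_p(f)|\leq 2\eta$; convergence of the means is then extracted in a final step via dominated convergence along subsequences realizing the $\liminf$ and $\limsup$, together with uniqueness of the minimizer. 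You instead cast the problem as convergence of minimizers of a family of strictly convex functionals: the uniform coercivity bound $\varphi_\alpha(a)\geq 2^{1-p}|a|^p-C$ confines all minimizers to a fixed compact interval, a single dominated-convergence lemma with the $\alpha$-independent majorant $2^p(|t|^p+R^p)(|f|^2+|g|^2)$ gives the joint continuity $\varphi_{\alpha_k}(a_k)\to\varphi(b)$, and passing the minimality inequality $\varphi_{\alpha_k}(\mu_p(h_{\alpha_k}))\leq\varphi_{\alpha_k}(\mu_p(f))$ to the limit plus strict convexity of $\varphi$ settles both conclusions at once. Your version is shorter and more conceptual; the paper's yields the explicit quantitative bound $|\mu_p(h_\alpha)-\mu_p(f)|\leq 2[4\Delta_p^2(f)]^{1/p}$ for small $|\alpha|$ (not actually needed elsewhere in the paper). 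One cosmetic point: your continuity lemma is stated for $a_k\in[-R,R]$, so to apply it to the constant sequence $\mu_p(f)$ you should enlarge $R$ to $\max\left(R,|\mu_p(f)|\right)$, which is harmless.
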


\begin{proof}
{\em Step I.}  We first show that 
\begin{equation} \label{part1eq}
\limsup_{\alpha \to 0} \Delta^2_p(h_{\alpha}) \leq \Delta^2_p(f).
\end{equation}
Noting that $1-|\alpha| \leq \|f + \alpha g\|_2 \leq 1 + |\alpha|,$ we have
\begin{align*}
\Delta^2_p(h_\alpha) & \leq \int |t-\mu_p(f)|^p |h_{\alpha}(t)|^2 \,\mbox{d}t
 \leq (1-|\alpha|)^{-2} \int |t-\mu_p(f)|^p |f(t) + \alpha g(t)|^2 \,\mbox{d}t\\
 & \leq 
 (1-|\alpha|)^{-2} \left[  \Delta_p(f) + |\alpha| \left( \int |t-\mu_p(f)|^p |g(t)|^2 \,\mbox{d}t \right)^{1/2} \right]^2.
\end{align*}
Since $\Delta_p(g)<\infty$ implies $\int |t-\mu_p(f)|^p |g(t)|^2 dt<\infty$, it follows that
(\ref{part1eq}) holds.

\medskip

\noindent {\em Step II.} 
We next show that if $|\alpha|$ is sufficiently small then
\begin{equation} \label{part2eq}
 | \mu_p(h_{\alpha}) - \mu_p(f) | \leq 2 [4 \Delta^2_p(f) ]^{1/p}.
\end{equation}
To begin, by (\ref{part1eq}), take $|\alpha|$ sufficiently small
so that $\Delta^2_p(h_{\alpha}) \leq 2 \Delta^2_p(f)$.
Let $\eta = [4\Delta^2_p(f)]^{1/p}$.  There holds
\begin{align}
\int_{|t-\mu_p(f)|> \eta} |f(t)|^2\,\mbox{d}t\leq
\eta^{-p} \int_{|t-\mu_p(f)|> \eta} |t-\mu_p(f)|^p |f(t)|^2\,\mbox{d}t \leq
 \frac{\Delta^2_p(f)}{\eta^p} = \frac{1}{4}. \label{part2-eq1}
\end{align}
Similarly,
\begin{align}
\frac{1}{2} \leq 1 - \frac{\Delta^2_p(h_{\alpha})}{\eta^p} 
\leq \int |h_{\alpha}(t)|^2\,\mbox{d}t - \int_{|t - \mu_p(h_{\alpha})|> \eta} |h_{\alpha}(t)|^2\,\mbox{d}t
= \int_{|t-\mu_p(h_\alpha)|\leq \eta} |h_{\alpha}(t)|^2\,\mbox{d}t. \label{part2-eq2}
\end{align}
Also note that
\begin{equation} \label{part2-eq3}
\int_{|t-\mu_p(h_{\alpha})|\leq \eta} |h_{\alpha}(t)|^2\,\mbox{d}t 
\leq (1-|\alpha|)^{-2} \left( \int_{|t-\mu_p(h_{\alpha})|\leq \eta}
|f(t)|^2 \,\mbox{d}t + \alpha^2 + 2 |\alpha| \right).
\end{equation}

Now, towards a contradiction, suppose that $|\mu_p(h_{\alpha}) - \mu_p(f)| > 2\eta$ holds.  
This would imply that
\begin{equation} \label{part2-eq4}
\int_{|t-\mu_p(h_{\alpha})|\leq \eta} |f(t)|^2 \,\mbox{d}t \leq \int_{|t-\mu_p(f)|> \eta} |f(t)|^2 \,\mbox{d}t.
\end{equation}
Combining (\ref{part2-eq1}), (\ref{part2-eq2}), (\ref{part2-eq3}), (\ref{part2-eq4}) gives
$$\frac{1}{2} \leq (1-|\alpha|)^{-2} \left( \frac{1}{4} + \alpha^2 + 2|\alpha| \right),$$
which yields a contradiction when $|\alpha|<\dst\frac{\sqrt{38}}{2}-3\sim 0.08$.%Philippe was$\alpha \to 0$.  
Therefore,
$|\mu_p(h_{\alpha}) - \mu_p(f)| \leq 2\eta$ must hold.
In particular, (\ref{part2eq}) holds when $|\alpha|$ is sufficiently small.
\medskip

\noindent {\em Step III.} We next show that
\begin{equation} \label{part3eq}
\Delta^2_p(f) \leq \liminf_{\alpha \to 0} \Delta^2_p(h_{\alpha}).
\end{equation}
If $|\alpha|$ is sufficiently small then (\ref{part2eq}) and Minkowski's inequality imply that 
\begin{align*}
\Delta^2_p(f) &\leq \int |t-\mu_p(h_{\alpha})|^2 |f(t)|^2 \,\mbox{d}t
= \int \bigl|t-\mu_p(h_{\alpha})\bigr|^p\bigl | \|f+\alpha g\|_2 h_{\alpha}(t) - \alpha g(t) \bigr|^2 \,\mbox{d}t\\
 &\leq 
 \left[  (1+|\alpha|) \Delta_p(h_{\alpha}) + |\alpha| \left( \int |t-\mu_p(h_{\alpha})|^p |g(t)|^2\,\mbox{d}t \right)^{1/2} \right]^2\\
&\leq
  \left[  (1+|\alpha|) \Delta_p(h_{\alpha}) +  |\alpha| 2^{\frac{p-1}{2}}\left( \int |t-\mu_p(f)|^p |g(t)|^2\,\mbox{d}t 
+ |\mu_p(f)-\mu_p(h_{\alpha})|^p \right)^{1/2} \right]^2\\
&\leq
  \left[  (1+|\alpha|) \Delta_p(h_{\alpha}) +  |\alpha| 2^{\frac{p-1}{2}}\left( \int |t-\mu_p(f)|^p |g(t)|^2 \,\mbox{d}t 
+2^{p+2}\Delta^2_p(f) \right)^{1/2} \right]^2.
\end{align*} 
Recalling (\ref{part1eq}), this implies that  (\ref{part3eq}) holds.  Moreover, (\ref{part1eq}) and (\ref{part3eq}) yield the theorem's conclusion on convergence of $p$-dispersions:
\begin{equation} \label{dispconv}
\lim_{\alpha \to 0} \Delta_p(h_{\alpha}) = \Delta_p(f).
\end{equation}
\medskip

\noindent {\em Step IV.} Finally, we show that
\begin{equation} \label{part4eq}
\lim_{\alpha \to 0} \mu_p(h_\alpha) = \mu_p(f).
\end{equation}
If $|\alpha|$ is sufficiently small then (\ref{part2eq}) implies that
\begin{align}
 \int |t - \mu_p(h_{\alpha})|^p |g(t)|^2 \,\mbox{d}t & \leq 2^{p-1} \left( | \mu_p(f) - \mu_p(h_{\alpha})|^p
 + \int | t - \mu_p(f)|^p |g(t)|^2 \,\mbox{d}t \right) \notag \\
 & \leq 2^{p-1} \left( 2^p[4\Delta^2_p(f)] + \int | t - \mu_p(f)|^p |g(t)|^2 \,\mbox{d}t \right) < \infty. \label{part4pre1}
\end{align}
Similar computations as for (\ref{part4pre1}) show that if $|\alpha|$ is sufficiently small then
\begin{align}
\left| \int |t \right. & \left. - \mu_p(h_{\alpha})|^p \bigl(\overline{f(t)} g(t)+f(t) \overline{g(t)}\bigr)\,\mbox{d}t \right|
 \leq 2 \int |t - \mu_p(h_{\alpha})|^p |f(t)| |g(t)| \,\mbox{d}t \notag \\ 
&\leq 2 \left( \int | t - \mu_p(h_{\alpha})|^p |f(t)|^2 \,\mbox{d}t \right)^{1/2}
\left( \int | t - \mu_p(h_{\alpha})|^p |g(t)|^2 \,\mbox{d}t \right)^{1/2} \notag \\
& \leq 2^{p} \left( 2^p[4\Delta^2_p(f)] + \int | t - \mu_p(f)|^p |f(t)|^2 \,\mbox{d}t \right)^{1/2} \notag \\
&\qquad  \times \left( 2^p[4\Delta^2_p(f)] + \int | t - \mu_p(f)|^p |g(t)|^2 \,\mbox{d}t \right)^{1/2}
<\infty. \label{part4pre2}
\end{align}
Note that 
\begin{eqnarray} 
\|f+\alpha g\|_2^{2} \Delta^2_p(h_{\alpha}) &=&  \int |t - \mu_p(h_{\alpha})|^p |f(t)|^2 \,\mbox{d}t
 + \alpha^2 \int |t - \mu_p(h_{\alpha})|^p |g(t)|^2 \,\mbox{d}t \label{part4-prelim-eq}\\
  &&+ \alpha \int |t - \mu_p(h_{\alpha})|^p (\overline{f(t)} g(t)+f(t) \overline{g(t)})\,\mbox{d}t, \notag
\end{eqnarray}
We now use that $\Delta_p^2(h_{\alpha}) \to \Delta_p^2(f)$ by (\ref{dispconv}), 
along with (\ref{part4pre1}) and (\ref{part4pre2}), to take limits (as $\alpha\to0$) on both sides of 
(\ref{part4-prelim-eq}) and conclude that the following limit exists and
\begin{equation} \label{alpha-lim-eq}
\lim_{\alpha \to 0} \int |t - \mu_p(h_{\alpha})|^p |f(t)|^2 \,\mbox{d}t  
= \int |t - \mu_p(f)|^p |f(t)|^2 \,\mbox{d}t = \Delta_p^2(f).
\end{equation}

Now, by (\ref{part2eq}), if $|\alpha|$ is sufficiently small then $\mu_p(h_\alpha)$ stays in a fixed compact set.
%%$I:=\Bigl[\mu_p(f)-2\bigl(4\Delta^2(f)\bigr)^{1/p},\mu_p(f)+2\bigl(4\Delta^2(f)\bigr)^{1/p}\Bigr].$
Thus, there exist sequences $\{\alpha_{k} \}_{k=1}^{\infty}, \{\widetilde{\alpha}_k\}_{k=1}^{\infty}$ 
such that $\alpha_k, \widetilde{\alpha}_k\to0$ and
$$\mu_p(h_{\alpha_k})\to \underline{\mu}:=\liminf_{\alpha\to0}\mu_p(h_\alpha) \in \R \ \ \
\hbox{ and } \ \ \
\mu_p(h_{\widetilde{\alpha}_k})\to \overline{\mu}:=\limsup_{\alpha\to0}\mu_p(h_\alpha) \in \R.$$ 
Note that by (\ref{part2eq})
$$|t - \mu_p(h_{\alpha_k})|^p |f(t)|^2\leq 2^{p-1}\bigl(|t-\mu_p(f)|^p+2^{p+2}\Delta^2(f)\bigr)|f(t)|^2\in 
L^1(\R).$$
Thus, by the dominated convergence theorem
$$
\int |t - \mu_p(h_{\alpha_k})|^p |f(t)|^2 \,\mbox{d}t\to \int |t -  \underline{\mu}|^p|f(t)|^2 \,\mbox{d}t.
$$ 
Therefore by (\ref{alpha-lim-eq})
$$
\int |t -  \underline{\mu}|^p|f(t)|^2 \,\mbox{d}t=\Delta_p^2(f),
$$
and by uniqueness of the global minimum defining $\Delta_p(f)$, we have $\underline{\mu}=\mu_p(f)$.
A similar argument shows that $\overline{\mu}=\mu_p(f)$.
It follows that (\ref{part4eq}) must hold.
%Thus, by uniqueness of the global minimum defining $\Delta_p(f)$
%and by continuity of $\varphi(a) = \int |t-a|^p |f(t)|^2 dt$
%it follows that (\ref{part4eq}) must hold.
\end{proof}

\section{Compactness and time-frequency concentration} \label{compact:sec}

Compactness plays an important role in several aspects of time-frequency analysis.
For example, connections with the uncertainty principle arise in \cite{Sh,Pe}.
% and in the study of
%trace class time-frequency localization operators \cite{ ?,?}.
We shall require the following compactness result of 
Kolmogorov, Riesz, and Tamarkin, \cite{Ko,Ri,Ta}, also see \cite{DS,Y, HOH, DFG}.  Recall that a set is relatively compact if its closure is compact.

\begin{theorem}\label{th:RT}
Let $\kk$ be a bounded subset of $L^2(\R)$. Then $\kk$ is relatively compact if and only if the following two conditions hold
\begin{itemize}
\renewcommand{\theenumi}{\roman{enumi}}
\item $\kk$ is equicontinuous:
$\dst\int_{\R}|f(t+a)-f(t)|^2\,\mathrm{d}t\to 0$ uniformly in $f\in\kk$ as $a\to 0$,
\item $\kk$ has uniform decay:
$\dst\int_{|t|\geq R}|f(t)|^2\,\mathrm{d}t\to 0$ uniformly in $f\in\kk$ as $R\to+\infty$.
\end{itemize}
\end{theorem}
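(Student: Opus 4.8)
The plan is to exploit the fact that $L^2(\R)$ is complete, so that relative compactness of $\kk$ is equivalent to total boundedness; the whole theorem then reduces to manufacturing, for each $\eps>0$, a finite $\eps$-net for $\kk$. For the necessity direction, assume $\kk$ is relatively compact, fix $\eps>0$, and cover $\overline{\kk}$ by finitely many balls of radius $\eps/3$ centered at $g_1,\dots,g_N\in L^2(\R)$. Each fixed center satisfies $\norm{g_j(\cdot+a)-g_j}_2\to0$ as $a\to0$ (continuity of translation in $L^2$) and $\int_{\abs t\geq R}\abs{g_j(t)}^2\,\mathrm{d}t\to0$ as $R\to\infty$; taking the maximum over the finite index set makes both limits uniform over the centers. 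Since translation is an $L^2$-isometry, for any $f\in\kk$ I would pick $g_j$ with $\norm{f-g_j}_2<\eps/3$ and apply the triangle inequality to $f(\cdot+a)-f$ and to $f\chi_{\{\abs t\geq R\}}$: the two extra terms cost at most $2\eps/3$, so both (i) and (ii) transfer from the centers to all of $\kk$ uniformly.

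The substantive half is sufficiency, which I would prove by combining a mollification with a truncation. Fix $\eps>0$ and, for $h>0$, introduce the local averaging operator $(A_hf)(t)=\frac1h\int_0^h f(t+s)\,\mathrm{d}s$. Minkowski's integral inequality gives $\norm{A_hf-f}_2\leq\frac1h\int_0^h\norm{f(\cdot+s)-f}_2\,\mathrm{d}s$, so the equicontinuity hypothesis (i) furnishes a single $h>0$, independent of $f\in\kk$, with $\norm{A_hf-f}_2<\eps/3$. Next, the uniform decay hypothesis (ii) provides $R>0$, independent of $f$, with $\norm{f\chi_{\{\abs t\geq R\}}}_2<\eps/3$; since $A_h$ preserves uniform decay up to a harmless shift of size $h$ in the truncation radius (one checks $\int_{\abs t>R}\abs{A_hf(t)}^2\,\mathrm{d}t\leq\int_{\abs u>R-h}\abs{f(u)}^2\,\mathrm{d}u$ by Cauchy--Schwarz and Fubini), the tail $\norm{(A_hf)\chi_{\{\abs t>R\}}}_2$ is likewise controlled by (ii). Thus every $f\in\kk$ lies within $\eps$ of its truncated mollification $(A_hf)\chi_{[-R,R]}$, uniformly in $f$.

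It then remains to show that $\mathcal G=\{(A_hf)\chi_{[-R,R]}:f\in\kk\}$ is totally bounded in $L^2$. A single application of Cauchy--Schwarz gives the pointwise bound $\abs{A_hf(t)}\leq h^{-1/2}\norm f_2$, and a second application yields $\abs{A_hf(t+\delta)-A_hf(t)}\leq h^{-1/2}\norm{f(\cdot+\delta)-f}_2$; hence, using the boundedness of $\kk$ together with hypothesis (i), the functions $A_hf$ are uniformly bounded and uniformly equicontinuous on the compact interval $[-R,R]$. By the Arzel\`a--Ascoli theorem $\mathcal G$ is relatively compact in $C([-R,R])$, hence totally bounded there, and since uniform convergence on a bounded interval implies $L^2$-convergence, $\mathcal G$ is totally bounded in $L^2([-R,R])\subset L^2(\R)$. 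Picking a finite $\eps$-net for $\mathcal G$ and combining it with the uniform approximation of the previous paragraph produces a finite net for $\kk$ of radius at most $2\eps$, so $\kk$ is totally bounded and therefore relatively compact.

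The main obstacle is this sufficiency direction, and within it the point requiring care is that the smoothing parameter $h$ and the truncation radius $R$ must be chosen depending only on $\eps$, never on the individual $f$; this is exactly what the uniformity built into hypotheses (i) and (ii) supplies, and it is what allows Arzel\`a--Ascoli to be applied to the whole mollified family at once rather than function by function.
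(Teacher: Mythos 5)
Your proposal is correct and complete. Note that the paper does not actually prove this statement: it is quoted as the classical Kolmogorov--Riesz--Tamarkin compactness theorem with references to \cite{Ko,Ri,Ta} (and to the expository accounts in \cite{DS,Y,HOH,DFG}), so there is no internal proof to compare against. Your argument --- necessity by transferring translation-continuity and tail decay from a finite $\eps/3$-net, and sufficiency by the mollification $A_h$, truncation to $[-R,R]$, and Arzel\`a--Ascoli, with $h$ and $R$ chosen uniformly over $\kk$ --- is exactly the standard proof found in those references, and all the estimates you state (the Minkowski integral inequality for $\norm{A_hf-f}_2$, the tail bound $\int_{|t|>R}|A_hf|^2\leq\int_{|u|>R-h}|f|^2$, and the pointwise bounds $|A_hf(t)|\leq h^{-1/2}\norm{f}_2$ and $|A_hf(t+\delta)-A_hf(t)|\leq h^{-1/2}\norm{f(\cdot+\delta)-f}_2$) check out.
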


These properties can  be reformulated in terms of the Fourier transform, see \cite{Pe}, cf. \cite{Sh}.

\begin{theorem} \label{cor:P}
Let $\kk$ be a bounded subset of $L^2(\R)$ and let $\widehat{\kk}=\{\widehat{f}\,:\ f\in\kk\}$. Then $\kk$ is equicontinuous if and only if $\widehat{\kk}$ has uniform decay. In particular, the following conditions are equivalent:
\begin{itemize}
\item $\kk$ is relatively compact,
\item $\kk$ and $\widehat{\kk}$ are both  equicontinuous,
\item $\kk$ and $\widehat{\kk}$ have both uniform decay.
\end{itemize}
\end{theorem}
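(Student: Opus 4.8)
The plan is to reduce everything to the single substantive claim that $\kk$ is equicontinuous if and only if $\widehat{\kk}$ has uniform decay, and then to bootstrap this claim using Theorem \ref{th:RT} and the unitarity of the Fourier transform to obtain the three-way equivalence. The starting point is to rewrite the equicontinuity modulus on the frequency side. Since translation by $a$ corresponds to modulation by $e^{2\pi i a\xi}$, Plancherel's theorem gives, for every $f$,
\begin{equation*}
\int_{\R}\abs{f(t+a)-f(t)}^2\,\mathrm{d}t
=\int_{\R}\abs{e^{2\pi i a\xi}-1}^2\abs{\widehat{f}(\xi)}^2\,\mathrm{d}\xi
=4\int_{\R}\sin^2(\pi a\xi)\,\abs{\widehat{f}(\xi)}^2\,\mathrm{d}\xi.
\end{equation*}
Denoting this common quantity by $\omega_f(a)$, equicontinuity of $\kk$ becomes $\sup_{f\in\kk}\omega_f(a)\to0$ as $a\to0$, and the whole matter reduces to comparing $\omega_f(a)$ with the tail mass $\int_{\abs{\xi}\geq R}\abs{\widehat{f}}^2$, uniformly in $f$.

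For the implication that uniform decay of $\widehat{\kk}$ forces equicontinuity of $\kk$, I would split $\omega_f(a)$ at $\abs{\xi}=R$: on $\abs{\xi}\geq R$ the bound $\sin^2\leq1$ and uniform decay make that piece small uniformly, while on $\abs{\xi}<R$ the bound $\sin^2(\pi a\xi)\leq\pi^2a^2R^2$ together with $\sup_{f\in\kk}\norm{f}_2<\infty$ make that piece small for $a$ small, again uniformly in $f$. The converse is the heart of the argument and the main obstacle, since one must extract tail control from a quantity in which $\sin^2$ oscillates. I would handle this by averaging in $a$: integrating over $a\in[0,\delta]$ and applying Tonelli yields
\begin{equation*}
\frac1\delta\int_0^\delta\omega_f(a)\,\mathrm{d}a
=\int_{\R}\left(2-\frac{\sin(2\pi\delta\xi)}{\pi\delta\xi}\right)\abs{\widehat{f}(\xi)}^2\,\mathrm{d}\xi.
\end{equation*}
The weight here is non-negative everywhere and, for $\abs{\xi}\geq R$ with the choice $R=2/(\pi\delta)$, is bounded below by $\tfrac32$; discarding the contribution of $\abs{\xi}<R$ then gives $\int_{\abs{\xi}\geq R}\abs{\widehat{f}}^2\leq\tfrac23\sup_{0\leq a\leq\delta}\omega_f(a)$ for every $f$. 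Given $\eps>0$, equicontinuity provides a $\delta$ making the right-hand side uniformly small, and since the tail is non-increasing in $R$ while $R\to\infty$ as $\delta\to0$, uniform decay of $\widehat{\kk}$ follows.

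It remains to assemble the three-way equivalence. Applying the equivalence just established to $\kk$ yields ``$\kk$ equicontinuous $\iff\widehat{\kk}$ has uniform decay'', and applying it to the (bounded) set $\widehat{\kk}$, together with the identity $\widehat{\widehat{f}}(t)=f(-t)$ --- which shows $\widehat{\widehat{\kk}}$ and $\kk$ have the same decay --- yields ``$\widehat{\kk}$ equicontinuous $\iff\kk$ has uniform decay''. Substituting both relations into the characterization of Theorem \ref{th:RT}, namely that $\kk$ is relatively compact if and only if $\kk$ is equicontinuous and has uniform decay, shows at once that each of the three listed conditions is equivalent to the conjunction ``$\kk$ and $\widehat{\kk}$ both have uniform decay''. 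Hence all three are equivalent, completing the proof.
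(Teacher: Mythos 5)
Your proof is correct, and it is worth noting that the paper itself does not prove this statement at all: it simply cites Pego \cite{Pe} (cf.\ \cite{Sh}) for the reformulation. So your argument is a genuine addition rather than a variant. The two directions are both sound: the Plancherel identity $\int|f(t+a)-f(t)|^2\,\mathrm{d}t=4\int\sin^2(\pi a\xi)|\widehat{f}(\xi)|^2\,\mathrm{d}\xi$ is right, the easy direction (split at $|\xi|=R$, use $\sin^2\leq\min\{1,\pi^2a^2R^2\}$ and boundedness of $\kk$) is routine, and the averaging device for the converse checks out: the averaged weight is $2\bigl(1-\mathrm{sinc}(2\pi\delta\xi)\bigr)\geq 0$, and for $|\xi|\geq 2/(\pi\delta)$ one has $|\mathrm{sinc}(2\pi\delta\xi)|\leq 1/4$, giving the lower bound $3/2$ and hence the uniform tail estimate. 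This averaging in $a$ is essentially the mechanism in Pego's original paper (he phrases it via a cutoff multiplier $1-\widehat{\rho}$ rather than an explicit average over translations, but the effect is the same: a nonnegative weight bounded below off a compact set and controlled above by the equicontinuity modulus). Your assembly of the three-way equivalence --- applying the single duality statement to both $\kk$ and the bounded set $\widehat{\kk}$, using $\widehat{\widehat{f}}(t)=f(-t)$ to identify the decay of $\widehat{\widehat{\kk}}$ with that of $\kk$, and substituting into Theorem \ref{th:RT} --- is exactly the right bookkeeping and reduces all three bullets to the conjunction ``$\kk$ and $\widehat{\kk}$ both have uniform decay.'' No gaps.
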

This gives the following corollary.  The result for $K_{\varphi,\psi}$ appears in \cite{Pe}, cf. \cite{Sh}.
The result for $K_{A}^{p,q}$ appears with $p=q=2$ in \cite{Sh}.  Since
\cite{Sh} is unpublished and since we shall require the result for general values of $p,q$, we include the proof here.

\begin{corollary} \label{cor:sh}
Fix $A>0$, $p,q>1$ 
and $\varphi,\psi \in L^2(\R)$, and define the closed sets
\begin{align*}
\kk_{A}^{p,q} &=\{f\in L^2(\R)\,:\ |\mu_p(f)|\leq A,\ |\Delta_p(f)|\leq A,\ |\mu_q(\widehat{f})|\leq A\ \mathrm{and}\ |\Delta_q(\widehat{f})|\leq A\},\\
\kk_{\varphi,\psi} &=\{f\in L^2(\R)\,:\ |f(t)|\leq|\varphi(t)|\ \mathrm{and}\ |\widehat{f}(t)|\leq|\psi(t)|
\ \mathrm{ for } \ a.e. \ t \in \R\}.
\end{align*}
Then $\kk_A^{p,q}$ and $\kk_{\varphi,\psi}$ are compact subsets of $\ltwo$.
\end{corollary}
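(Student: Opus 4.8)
The plan is to establish, for each of the two families, the two properties that together give compactness in $\ltwo$: relative compactness and closedness. Relative compactness will in both cases be read off from the Fourier-analytic criterion of Theorem~\ref{cor:P}, namely that a bounded set is relatively compact as soon as both it and its Fourier image have uniform decay.

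I would dispose of $\kk_{\varphi,\psi}$ first, as it is the softer of the two. Every $f\in\kk_{\varphi,\psi}$ satisfies $\|f\|_2\le\|\varphi\|_2$, so the family is bounded, and the pointwise envelopes give the uniform tail bounds $\int_{|t|\ge R}|f(t)|^2\,\mathrm{d}t\le\int_{|t|\ge R}|\varphi(t)|^2\,\mathrm{d}t$ and $\int_{|t|\ge R}|\widehat f(t)|^2\,\mathrm{d}t\le\int_{|t|\ge R}|\psi(t)|^2\,\mathrm{d}t$, each tending to $0$ as $R\to\infty$ uniformly in $f$. Theorem~\ref{cor:P} then yields relative compactness. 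Closedness is immediate: if $f_n\to f$ in $\ltwo$ with $f_n\in\kk_{\varphi,\psi}$, then a subsequence converges a.e., and $\widehat{f_n}\to\widehat f$ in $\ltwo$ has a further a.e.-convergent subsequence, so the bounds $|f|\le|\varphi|$ and $|\widehat f|\le|\psi|$ survive in the limit and $f\in\kk_{\varphi,\psi}$.

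For $\kk_A^{p,q}$ the functions are unit-norm (so the set is bounded), and relative compactness again follows from Theorem~\ref{cor:P}. Indeed, for $R\ge 2A$ and $f\in\kk_A^{p,q}$ the bound $|\mu_p(f)|\le A$ forces $\{|t|\ge R\}\subseteq\{|t-\mu_p(f)|\ge R-A\}$, so by Markov's inequality $\int_{|t|\ge R}|f|^2\,\mathrm{d}t\le(R-A)^{-p}\int|t-\mu_p(f)|^p|f|^2\,\mathrm{d}t\le(R-A)^{-p}A^2$, which tends to $0$ uniformly; the identical estimate with $(q,\widehat f)$ in place of $(p,f)$ gives uniform decay of $\widehat{\kk_A^{p,q}}$. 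The real work is closedness. Let $f_n\in\kk_A^{p,q}$ with $f_n\to f$ in $\ltwo$; then $\|f\|_2=1$, and passing to a subsequence I may assume $f_n\to f$ a.e. and $\mu_n:=\mu_p(f_n)\to\mu_\ast$ with $|\mu_\ast|\le A$. Fatou's lemma gives $\int|t-\mu_\ast|^p|f|^2\,\mathrm{d}t\le\liminf_n\int|t-\mu_n|^p|f_n|^2\,\mathrm{d}t\le A^2$, so in particular $\Delta_p(f)\le A$; the issue is to identify $\mu_\ast$ with $\mu_p(f)$ so that the mean constraint $|\mu_p(f)|\le A$ also passes to the limit.

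The main obstacle is exactly this continuity of the $p$-mean along $L^2$-convergent sequences: naive passage to the limit in $\int|t-a|^p|f_n|^2\,\mathrm{d}t$ fails because the $p$-weighted integrands need not be uniformly integrable (this is precisely the pathology of the example $\psi_s$ in Section~\ref{background:sec}). I would circumvent it using the first-order optimality condition rather than the value function. Since $p>1$, the strictly convex map $a\mapsto\int|t-a|^p|f_n|^2\,\mathrm{d}t$ is differentiable and $\mu_n$ is characterized by $g_n(\mu_n)=0$, where $g_n(a)=\int|t-a|^{p-1}\mathrm{sgn}(t-a)|f_n|^2\,\mathrm{d}t$. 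The uniform bound $\int|t|^p|f_n|^2\,\mathrm{d}t\le 2^{p-1}(A^2+A^p)$ now works in my favour: because $p-1<p$ one has $\int_{|t|>R}|t-\mu_\ast|^{p-1}|f_n|^2\,\mathrm{d}t\le(R-A)^{-1}\int|t-\mu_\ast|^p|f_n|^2\,\mathrm{d}t$, a uniformly bounded quantity, so the $(p-1)$-weighted integrands are uniformly integrable, whence $g_n(\mu_\ast)\to g(\mu_\ast)$ where $g$ is the analogous functional for $f$. Combined with the uniform (H\"older or Lipschitz) modulus of continuity of $a\mapsto|a|^{p-1}\mathrm{sgn}(a)$, which together with the uniform mass and moment bounds gives equicontinuity of the $g_n$ and hence $g_n(\mu_n)-g_n(\mu_\ast)\to0$, this yields $g(\mu_\ast)=0$; by strict convexity $\mu_\ast$ is the unique minimizer, i.e. $\mu_\ast=\mu_p(f)$, so $|\mu_p(f)|\le A$. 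Running the symmetric argument on the Fourier side secures $|\mu_q(\widehat f)|\le A$ and $\Delta_q(\widehat f)\le A$, so $f\in\kk_A^{p,q}$ and the set is closed; together with relative compactness this gives compactness.
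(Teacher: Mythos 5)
Your proof is correct, and its core — deriving uniform decay of both $\kk$ and $\widehat{\kk}$ and invoking Theorem~\ref{cor:P} — is exactly the paper's argument (the paper bounds the tail by $2^pA^2/R^p$ using $|t-a|\geq R/2$ for $|t|>R>2A$, $|a|\leq A$; your $(R-A)^{-p}A^2$ is the same estimate). Where you genuinely go beyond the paper is closedness: the paper simply labels the two sets ``closed'' in the statement and its proof addresses only relative compactness. For $\kk_{\varphi,\psi}$ closedness is indeed routine, but for $\kk_A^{p,q}$ it is not, precisely because $\mu_p$ and $\Delta_p$ are not continuous under $L^2$ convergence (the paper's own example $\psi_s$ in Section~\ref{background:sec} shows this), and the soft route — Fatou with the limit point $\mu_\ast$ of the means — only yields $\Delta_p(f)\leq A$ and $|\mu_p(f)|\leq A+2A^{2/p}$, not $|\mu_p(f)|\leq A$. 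Your device of passing to the first-order optimality condition $g_n(\mu_n)=0$, using the uniform $p$-moment bounds to get uniform integrability of the $(p-1)$-weighted integrands and equicontinuity of the $g_n$ (Lipschitz with $(p-2)$-moment control for $p\geq2$, global $(p-1)$-H\"older for $1<p<2$), and concluding $g(\mu_\ast)=0$ hence $\mu_\ast=\mu_p(f)$ by strict convexity, correctly closes this gap; it is in the same spirit as, but independent of, the stability analysis the paper carries out separately in Proposition~\ref{prop:dispconv}. In short: same mechanism for the compactness criterion, plus a more complete (and needed, if one insists on compactness rather than mere relative compactness) verification that $\kk_A^{p,q}$ is closed.
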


\begin{proof}
For $\kk_A^{p,q}$ note that if $|t|>R>2A$ and $|a|<A$, then $|t-a|\geq R/2$.
It follows that if $|\mu_p(f)| \leq A$ and $\Delta_p(f) \leq A$ then as $R\to \infty$
$$
\int_{|t|\geq R}|f(t)|^2\,\mbox{d}t\leq \frac{2^p}{R^p}\int_{|t|\geq R}|t-\mu_p(f)|^p|f(t)|^2\,\mbox{d}t\leq 
\frac{2^pA^2}{R^p} \to 0.
$$
This, along with a similar computation for $\widehat{f}$ and Theorem \ref{cor:P}
shows that  $\kk_{A}^{p,q}$ is compact.

For $\kk_{\varphi,\psi}$ note $\varphi\in L^2(\R)$ implies that 
$\int_{|t|\geq R}|f(t)|^2\,\mbox{d}t\leq\int_{|t|\geq R}|\varphi(t)|^2\,\mbox{d}t\to 0,$
as $R \to \infty$.
This, along with a similar computation for $\widehat{f}$ and Theorem \ref{cor:P}
shows that  $\kk_{\varphi,\psi}$ is compact.
\end{proof} 

We may apply Corollary \ref{cor:sh} to generating systems with appropriate separation properties.
We use the notation $\#I$ to denote the cardinality of a set $I$.

\begin{lemma}
Suppose that $\{e_n\}_{n=1}^{\infty} \subset \ltwo$ with $\|e_n\|_{2}=1$ satisfies one of the
following conditions:
\begin{enumerate}
\item  $\{e_n\}_{n=1}^{\infty}$ converges weakly to zero in 
$H=\overline{span} \{ e_n \}_{n=1}^{\infty}$,
\item  for every $m\in \N$, $\# \{ n \in \N : | \langle e_n, e_m \rangle | \geq 1/2 \} < \infty.$
\end{enumerate}
Then $\{e_n\}_{n=1}^{\infty}$ cannot be contained in a relatively compact
subset of $\ltwo$.
\end{lemma}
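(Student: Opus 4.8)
The plan is to argue by contradiction, reducing everything to the extraction of a single strongly convergent subsequence. Suppose $\{e_n\}_{n=1}^{\infty}$ is contained in a relatively compact subset $\kk\subset\ltwo$. Since $\ltwo$ is a metric space, the compact set $\overline{\kk}$ is sequentially compact, so the sequence $\{e_n\}$ admits a subsequence $\{e_{n_k}\}$ converging in the $\ltwo$-norm to some $g\in\ltwo$. Continuity of the norm forces $\|g\|_2=\lim_k\|e_{n_k}\|_2=1$, so in particular $g\neq 0$. I would then derive a contradiction from this strongly convergent unit-norm subsequence under each of the two hypotheses separately.

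Under hypothesis (1) I would use that strong convergence implies weak convergence. Since each $e_{n_k}\in H$ and $H$ is closed, the limit satisfies $g\in H$; and strong convergence gives $\langle e_{n_k},h\rangle\to\langle g,h\rangle$ for every $h\in H$. But the weak convergence to zero means $\langle e_{n_k},h\rangle\to 0$ for every $h\in H$, so $\langle g,h\rangle=0$ for all $h\in H$. Taking $h=g$ yields $\|g\|_2^2=0$, i.e. $g=0$, contradicting $\|g\|_2=1$.

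Under hypothesis (2) I would exploit that a strongly convergent sequence is Cauchy. Expanding the norm gives $\|e_{n_k}-e_{n_{k'}}\|_2^2 = 2 - 2\,\Re\langle e_{n_k},e_{n_{k'}}\rangle$, so the Cauchy property produces an index $K$ with $\|e_{n_k}-e_{n_{k'}}\|_2^2<1$ for all $k,k'\geq K$, equivalently $\Re\langle e_{n_k},e_{n_{k'}}\rangle>1/2$ and hence $|\langle e_{n_k},e_{n_{k'}}\rangle|>1/2$ for all $k,k'\geq K$. Fixing $m=n_K$, this shows $|\langle e_{n_k},e_m\rangle|>1/2$ for all $k\geq K$, so infinitely many distinct indices $n_k$ satisfy $|\langle e_{n_k},e_m\rangle|\geq 1/2$, contradicting hypothesis (2).

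The argument is essentially soft, and there is no serious obstacle: the only point requiring care is the passage to a strongly convergent subsequence, which rests on the equivalence between relative compactness and sequential precompactness in the metric space $\ltwo$, together with the observations that the strong limit inherits unit norm and, in case (1), membership in the closed subspace $H$. Once that subsequence is in hand, each hypothesis is incompatible with it for an elementary reason — orthogonality of $g$ to its own ambient subspace in case (1), and the asymptotic near-equality of the terms (which forces large mutual inner products) in case (2).
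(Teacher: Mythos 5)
Your proof is correct and follows essentially the same route as the paper: extract a strongly convergent (hence Cauchy) unit-norm subsequence and observe that its terms eventually have pairwise inner products exceeding $1/2$, contradicting the separation hypothesis. The only cosmetic differences are that the paper first reduces case (1) to case (2) rather than treating (1) directly via weak limits, and it derives the contradiction by a triangle inequality through the limit function instead of invoking the Cauchy property outright; both variants are equally valid.
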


\begin{proof} Since condition (1) implies condition (2), it suffices to assume that (2) holds.
Assume towards a contradiction that $\{e_n\}_{n=1}^{\infty}$ is contained in a relatively compact
subset of $\ltwo$.  Then there exists $f\in H$ and a convergent subsequence 
$\{e_{n_k}\}_{n=1}^{\infty}$ such that $e_{n_k} \to f$ strongly in $H$.  Since $\|e_{n_k}\|_{2}=1$
we have that $\|f\|_{2}=1$.

By strong convergence, we have $e_{n_k} \to f$ weakly in $H$.  So
$\langle e_{n_k}, f \rangle \to \langle f, f \rangle =1$. 
Fix $\epsilon>0$.  For $k$ sufficiently large we have $\langle e_{n_k}, f \rangle > 1-\epsilon$
and $\|f - e_{n_k}\|_{2} < \epsilon.$  By (2), for all $j$ sufficiently large (depending on $k$)
one has $| \langle e_{n_k}, e_{n_j} \rangle | < 1/2.$ Thus
$$
1- \epsilon  \leq | \langle e_{n_k}, f \rangle | \leq
| \langle e_{n_k}, e_{n_j} \rangle | + | \langle e_{n_k}, f - e_{n_j} \rangle |
\leq | \langle e_{n_k}, e_{n_j} \rangle | + \|f - e_{n_j} \|_{2} <
1/2 + \epsilon.
$$
This gives a contradiction when $0<\epsilon<1/4$.%Philippe:was is sufficiently small.
\end{proof}

In view of the condition (\ref{gg-cond}) and the frame inequality (\ref{frame-ineq}) we have
the following result.

\begin{corollary}
Suppose $\{e_n\}_{n=1}^{\infty}\subset \ltwo$ with $\|e_n\|_2=1$ is
a frame or Schauder basis for its closed linear span then
$\{e_n\}_{n=1}^{\infty}$ cannot be contained in a relatively compact subset of $\ltwo$.
\end{corollary}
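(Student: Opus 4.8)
The plan is to reduce the statement to the unnumbered lemma immediately preceding it by checking that, in each of the two cases, the system $\{e_n\}_{n=1}^{\infty}$ satisfies condition (1) of that lemma, namely that it converges weakly to zero in $H = \overline{span}\{e_n\}_{n=1}^{\infty}$. Once weak null convergence is established, the lemma directly yields that $\{e_n\}_{n=1}^{\infty}$ cannot be contained in a relatively compact subset of $\ltwo$, which is the desired conclusion.

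First I would treat the frame case. Fix $f\in H$. The upper frame bound in (\ref{frame-ineq}) gives $\sum_{n=1}^{\infty}|\langle f, e_n\rangle|^2 \leq B\|f\|^2 < \infty$, so this series converges and hence its terms tend to zero; thus $\langle f, e_n\rangle \to 0$ as $n\to\infty$. Since $f\in H$ was arbitrary, $e_n\to 0$ weakly in $H$, which is exactly condition (1). For a Schauder basis the argument has the same structure, now invoking the left-hand inequality of the Gurari\u{\i}--Gurari\u{\i} condition (\ref{gg-cond}): for each $f\in H$ one has $\sum_{n=1}^{\infty}|\langle f, e_n\rangle|^s \leq (\|f\|/A)^s < \infty$ with $s<\infty$, so again $\langle f, e_n\rangle \to 0$ and therefore $e_n\to 0$ weakly in $H$. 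Applying the lemma in either case completes the proof.

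There is no genuine obstacle here; the result is an immediate consequence of the lemma together with the two defining inequalities. The only point requiring attention is to invoke the correct half of each two-sided estimate, namely the inequality that bounds a fixed power-sum of the coefficients $\langle f, e_n\rangle$ from above, and then to observe that convergence of that series forces the coefficients, and hence the system, to be weakly null. It is worth noting where the hypothesis $\|e_n\|_2 = 1$ enters: it is precisely this normalization that makes weak convergence to zero incompatible with relative compactness, since any relatively compact sequence admits a norm-convergent subsequence whose limit would have norm $1$, contradicting weak convergence to $0$. This incompatibility is already packaged inside the preceding lemma, so the corollary follows by simply producing the weak null property.
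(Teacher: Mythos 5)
Your proposal is correct and follows exactly the route the paper intends: the corollary is stated as an immediate consequence of the preceding lemma together with the upper bound in the frame inequality (\ref{frame-ineq}) and the left-hand side of the Gurari\u{\i}--Gurari\u{\i} condition (\ref{gg-cond}), each of which bounds a power-sum of the coefficients $\langle f, e_n\rangle$ and hence forces $e_n \to 0$ weakly, i.e.\ condition (1) of the lemma. Your write-up simply makes explicit the one-line justification the paper gives.
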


Together with Corollary \ref{cor:sh} this gives the following.

\begin{corollary}\label{thm:tf}
Suppose that $\{e_n\}_{n=1}^{\infty} \subset L^2(\R)$ with $\|e_n\|_{2}=1$ satisfies one of the following two conditions:
\begin{itemize}
\item there exists $A>0$, $p,q>1$ such that, for every $n\in \N$,
$$
|\mu_p(e_n)| \leq A,\ \Delta_p(e_n) \leq A,\ |\mu_q(\widehat{e_n})| \leq A,
\ \Delta_q(\widehat{e_n})\leq A,
$$
\item there exists $\varphi,\psi\in L^2(\R)$ such that
$$
|e_n(t)|\leq |\varphi(t)| \hbox{ and } |\widehat{e_n}(t)|\leq |\psi(t)| \hbox{ a.e.} \ t\in\R.
$$
\end{itemize}
Then $\{e_n\}_{n=1}^{\infty}$ cannot be a Schauder basis or frame for its closed linear span.
\end{corollary}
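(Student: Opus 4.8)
The plan is to reduce the statement to two facts already in hand: Corollary \ref{cor:sh}, which says that $\kk_A^{p,q}$ and $\kk_{\varphi,\psi}$ are compact in $\ltwo$, and the corollary stated just above (that a unit-norm frame or Schauder basis for its closed linear span cannot be contained in a relatively compact subset of $\ltwo$). Essentially all of the analytic content has been front-loaded into those two results, so the argument here is a short verification that each hypothesis forces the system $\{e_n\}_{n=1}^{\infty}$ to lie inside one of these compact sets, followed by a direct appeal to the preceding corollary.

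First I would dispose of the two conditions one at a time. If the first condition holds, then for every $n$ the four quantities $|\mu_p(e_n)|$, $\Delta_p(e_n)$, $|\mu_q(\widehat{e_n})|$ and $\Delta_q(\widehat{e_n})$ are bounded by $A$; in particular $\Delta_p(e_n)<\infty$, so $\mu_p(e_n)$ is well defined, and these bounds are exactly the defining inequalities of $\kk_A^{p,q}$, whence $e_n \in \kk_A^{p,q}$ for all $n$. If instead the second condition holds, the pointwise almost-everywhere bounds $|e_n(t)| \leq |\varphi(t)|$ and $|\widehat{e_n}(t)| \leq |\psi(t)|$ are precisely the membership requirement for $\kk_{\varphi,\psi}$, so $e_n \in \kk_{\varphi,\psi}$ for all $n$. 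In either case the entire system is contained in a single one of these sets.

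Then I would invoke Corollary \ref{cor:sh} to conclude that this containing set is compact, hence relatively compact, in $\ltwo$. The preceding corollary now applies to the unit-norm system $\{e_n\}_{n=1}^{\infty}$ and yields the desired conclusion: since a frame or Schauder basis for its closed linear span can never be contained in a relatively compact subset of $\ltwo$, the system $\{e_n\}_{n=1}^{\infty}$ can be neither. I do not expect any genuine obstacle in this final step, as it merely chains the prior results; the only points needing a moment's care are checking that the normalization $\|e_n\|_2 = 1$ assumed here is exactly what the preceding corollary requires, and noting that the compactness of $\kk_A^{p,q}$ and $\kk_{\varphi,\psi}$ is more than enough to supply the relative compactness that triggers the contradiction.
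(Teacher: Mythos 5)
Your argument is correct and is exactly the paper's intended proof: the paper derives this corollary by combining Corollary \ref{cor:sh} (compactness of $\kk_A^{p,q}$ and $\kk_{\varphi,\psi}$) with the immediately preceding corollary that a unit-norm frame or Schauder basis for its closed span cannot lie in a relatively compact subset of $\ltwo$. Nothing further is needed.
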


For our purposes we will need a more quantitative version of Corollary \ref{thm:tf}.

\begin{lemma} \label{pigeon-lem}
Suppose that $\{e_n\}_{n=1}^{k}\subset \ltwo$ with $\|e_n\|_{2}=1$ is such that for every
$m\in \{1,2,\cdots, k\}$,
$$
\# \{ n \in \{1,2,\cdots,k\} : | \langle e_n, e_m \rangle | \geq 1/2 \} < D.
$$
There exists $J \subset \{1,2,\cdots, k\}$ satisfying:
\begin{itemize}
\item $\#J \geq (k -1)/(\lceil D \rceil+1)$,
\item If $m,n \in J$ and $m\neq n$ then $| \langle e_m, e_n \rangle | < 1/2$.
\end{itemize}
\end{lemma}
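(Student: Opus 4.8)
The plan is to recast the hypothesis as a bounded-degree condition on a graph and then extract a large ``independent'' subset. I would define a graph $G$ on the vertex set $\{1,2,\dots,k\}$ by joining distinct indices $m$ and $n$ with an edge precisely when $|\langle e_m,e_n\rangle|\geq 1/2$. The relation is symmetric, since $|\langle e_m,e_n\rangle|=|\langle e_n,e_m\rangle|$, and this symmetry is exactly what guarantees that any subset $J$ with no internal edges automatically satisfies the second requirement: such a $J$ is, by definition, a set on which $|\langle e_m,e_n\rangle|<1/2$ for all distinct $m,n\in J$. For each fixed $m$ the set $S_m=\{n:|\langle e_n,e_m\rangle|\geq 1/2\}$ always contains $m$ itself, because $|\langle e_m,e_m\rangle|=\|e_m\|_2^2=1$; the hypothesis $\#S_m<D$ therefore bounds the degree of $m$ in $G$, and since degrees are integers the strict inequality upgrades to $\#S_m\leq\lceil D\rceil-1$.

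I would then take $J$ to be a \emph{maximal} subset of $\{1,\dots,k\}$ on which the inner products stay below $1/2$ (equivalently, a maximal independent set of $G$), which exists because the empty set qualifies and $\{1,\dots,k\}$ is finite. The key point is a covering estimate driven by maximality: every index $i\notin J$ must be forbidden by some already-chosen $m\in J$, i.e. $|\langle e_i,e_m\rangle|\geq 1/2$, so $i\in S_m$; and every $i\in J$ lies in its own $S_i$. Hence the sets $\{S_m\}_{m\in J}$ cover $\{1,\dots,k\}$, which gives $k\leq\sum_{m\in J}\#S_m\leq(\lceil D\rceil-1)\,\#J$, and therefore $\#J\geq k/(\lceil D\rceil-1)$. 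This is comfortably stronger than the asserted bound $(k-1)/(\lceil D\rceil+1)$.

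Alternatively, the same conclusion follows from a one-pass greedy selection: repeatedly pick any surviving index $m$, place it in $J$, and delete $m$ together with every surviving index of $S_m$; each round removes at most $\lceil D\rceil-1$ indices, and by symmetry the chosen indices that survive all remain pairwise below $1/2$. After $\#J$ rounds all $k$ indices have been removed, so $k\leq(\lceil D\rceil-1)\,\#J$ and $\#J\geq k/(\lceil D\rceil-1)$ once more.

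The argument is purely combinatorial and I do not anticipate a serious analytic obstacle. The only points requiring care are (i) invoking the symmetry of the correlation relation so that the pairwise property of $J$ comes for free, and (ii) the constant bookkeeping, namely converting the strict bound $\#S_m<D$ into the integer bound $\lceil D\rceil-1$ and observing that the stated estimate, with $\lceil D\rceil+1$ in the denominator and $k-1$ in the numerator, is a deliberately lossy but clean form that both the covering count and the greedy count beat.
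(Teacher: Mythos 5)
Your proposal is correct and is essentially the paper's argument: the paper proves the lemma by exactly the greedy selection you describe as your alternative (pick an index, discard the fewer than $D$ indices correlated with it at level $\geq 1/2$, and repeat), which is just a reordered-bookkeeping version of your maximal-independent-set covering count. Your accounting is slightly sharper (you get $\#J\geq k/(\lceil D\rceil-1)$ rather than $(k-1)/(\lceil D\rceil+1)$), but since the lemma only claims the weaker bound this makes no difference downstream.
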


\begin{proof} We may without loss of generality assume that $D$ is an integer by replacing $D$
with $\lceil D \rceil$.
Reordering $\{e_n\}_{n=1}^k$ if necessary, we may assume that
$| \langle e_n , e_1 \rangle | < 1/2$ for $n=2,\cdots, k-D$.
Reordering $\{e_n\}_{n=2}^{k-D}$ if necessary, we may assume
that $| \langle e_n, e_2 \rangle | < 1/2$ for $n=3, \cdots, k-2D.$
Continuing this process, we may assume that
$| \langle e_n, e_j \rangle | < 1/2$ for $n=(j+1), \cdots, (k-jD)$
as long as $j+1 \leq k-jD$, that is $j \leq (k-1)/(D+1)$.
If $M = \lfloor (k-1)/(D+1) \rfloor +1$ then the set $\{e_n\}_{n=1}^{M}$ satisfies
$| \langle e_l, e_m  \rangle | <1/2$ whenever 
$l \neq m$ and $l,m \in \{1,2, \cdots, M \}$.
\end{proof}

\begin{lemma} \label{lem:quant}
Let $S \subset \ltwo$ be a relatively compact subset of $\ltwo$.  Fix $C,s,D>0.$
There exists $N \in \N$ such that 
if  $\{e_n\}_{n=1}^{\infty} \subset \ltwo$ with $\|e_n\|_{2}=1$ satisfies one of the
following conditions:
\begin{enumerate}
\item  for every 
$f \in H= \overline{span} \{e_n \}_{n=1}^{\infty}$,
$\dst\left( \sum_{n=1}^{\infty} |\langle f, e_n \rangle |^s \right)^{1/s} \leq C\|f\|_{2},$
\item  for every
$m\in \N$, $\# \{ n \in \N : | \langle e_n, e_m \rangle | \geq 1/2 \} < D,$
\end{enumerate}
and $\{e_n\}_{n=1}^k \subset S$ then $k \leq N$ must hold.  The constant $N$ depends on $S$ and the relevant constants among $C,D,s$.
\end{lemma}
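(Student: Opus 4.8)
The plan is to reduce the two hypotheses to a single combinatorial condition, then to combine the pigeonhole estimate of Lemma \ref{pigeon-lem} with the total boundedness of $S$. First I would show that condition (1) forces condition (2) for a suitable $D$ depending only on $C$ and $s$. Substituting $f=e_m$ into (1) and using $\|e_m\|_2=1$ gives $\sum_{n}|\langle e_m,e_n\rangle|^s\le C^s$, and since every index $n$ with $|\langle e_n,e_m\rangle|\ge 1/2$ contributes at least $2^{-s}$ to this sum, there can be at most $(2C)^s$ of them. Hence (1) implies (2) with $D=(2C)^s+1$, a quantity depending only on $C$ and $s$, and it suffices to treat case (2).

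Assuming (2), fix $k$ with $\{e_n\}_{n=1}^k\subset S$ and restrict attention to this finite system. The separation count over $\{1,\dots,k\}$ is bounded by the count over all of $\N$, so the hypothesis of Lemma \ref{pigeon-lem} holds, and that lemma produces a set $J\subset\{1,\dots,k\}$ with $\#J\ge (k-1)/(\lceil D\rceil+1)$ such that $|\langle e_m,e_n\rangle|<1/2$ whenever $m,n\in J$ are distinct. The crux is to convert this inner-product separation into a genuine norm separation: for distinct $m,n\in J$,
\[
\|e_m-e_n\|_2^2 = 2 - 2\,\mathrm{Re}\,\langle e_m,e_n\rangle \ge 2 - 2|\langle e_m,e_n\rangle| > 1,
\]
so the vectors $\{e_n\}_{n\in J}$ are pairwise at distance strictly greater than $1$.

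Finally I would exploit compactness. Since $S$ is relatively compact, $\overline{S}$ is a compact subset of the complete space $\ltwo$ and is therefore totally bounded; cover $\overline{S}$ by finitely many open balls of radius $1/2$, and let $M$ denote the number required. Any such ball has diameter at most $1$, so it can contain at most one member of the pairwise $(>1)$-separated family $\{e_n\}_{n\in J}$. Consequently $\#J\le M$, and combining this with the pigeonhole bound yields $(k-1)/(\lceil D\rceil+1)\le M$, that is, $k\le M(\lceil D\rceil+1)+1=:N$. Here $M$ is extracted solely from $S$, while $D$ depends only on $D$ in case (2) and only on $C,s$ in case (1), so $N$ has exactly the claimed dependence.

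The substantive content is the passage from the collective separation hypothesis to this geometric $1/2$-net argument, and the main point to watch is that $N$ must be \emph{uniform} over all admissible systems $\{e_n\}_{n=1}^{\infty}$. This uniformity is precisely what the construction delivers, since neither $M$ nor $D$ references any individual system: they come only from $S$ and the fixed constants $C,s,D$. I do not expect a serious obstacle beyond bookkeeping the constant through the reduction from case (1).
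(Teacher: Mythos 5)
Your proposal is correct and follows essentially the same route as the paper: the reduction of condition (1) to condition (2) via the $\ell^s$ bound with $f=e_m$ (yielding the same constant $2^sC^s$), the application of Lemma \ref{pigeon-lem} to extract the $1/2$-separated subfamily $J$, the identity $\|e_m-e_n\|_2^2=2-2\,\Re\langle e_m,e_n\rangle>1$, and the finite cover of $\overline{S}$ by radius-$1/2$ balls giving $\#J\leq M$ and $k\leq M(\lceil D\rceil+1)+1$. No gaps; the only (harmless) addition is your explicit remark that the cardinality hypothesis of Lemma \ref{pigeon-lem} over $\{1,\dots,k\}$ is inherited from the hypothesis over all of $\N$.
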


\begin{proof}
Let us first prove that condition (1) implies condition (2).
Indeed, if (1) holds then for all $m\in\N$, 
$\dst\sum_{n=1}^{\infty} | \langle e_n, e_m \rangle |^s \leq C^s.$
Tchebyshev's inequality gives (2) with $D=2^sC^s+1$
$$
\forall m\in \N, \ \ \ \# \{ n\in\N : | \langle e_m, e_n \rangle | \geq1/2 \} \leq 2^s C^s.
$$

Suppose now that (2) holds.  Let $J\subset \{1,2,\cdots,k\}$ be the set obtained when
Lemma \ref{pigeon-lem} is applied to $\{e_n\}_{n=1}^k$.
Since $S$ is relatively compact, the $\ltwo$ closure of $S$, denoted $\overline{S}$, 
is compact.  The open balls $B_{1/2}(x) = \{ y \in \ltwo : \| y-x \|_{2}<1/2\}$ with $x \in S$ provide
an open cover of $\overline{S}$.  So there exist $\{x_n\}_{n=1}^M \subset S$ such that
$\dst\overline{S} \subset \bigcup_{n=1}^M B_{1/2}(x_n)$.
By properties of the set $J$, if $m\neq n$ and $m,n\in J$ then there holds
$$
\|e_n -e_m\|_2^2 = 2[1-\Re(\langle e_n, e_m \rangle)] \geq 2(1-|\langle e_n, e_m \rangle |)> 1.
$$
So each $B_{1/2}(x_n)$ contains at most one
of the $\{e_n\}_{n\in J}$.
However since $$\dst\{e_n\}_{n\in J} \subset \overline{S} \subset \bigcup_{n=1}^M B_{1/2}(x_n),$$ 
we obtain $\#J \leq M$.
Thus, $k \leq M(\lceil D \rceil+1) +1$ and we may take $N=M(\lceil D \rceil+1)+1.$
\end{proof}

In view of Corollary \ref{cor:sh}, Lemma \ref{lem:quant} implies the following theorem.

\begin{theorem}\label{th:210}
Fix $A,C,D,s>0$, $p,q>1$ and $\varphi, \psi \in \ltwo$.
There exist constants $M=M(A,C,D,s,p,q)$ and $N = N(\varphi,\psi,C,D,s)$ such that
if $\{e_n\}_{n=1}^{\infty} \subset L^2(\R)$ with $\|e_n\|_{2}=1$
satisfies either condition (1) or (2) from Lemma \ref{lem:quant}
({\it e.g.} if it is either a Schauder basis or a frame) %Philippe: added this
then the sets
\begin{align*}
I & =\big\{n\in \N \,:\ 
|\mu_p(e_n)|\leq A,\ \Delta_p(e_n)\leq A,\  |\mu_q(\widehat{e_n})|\leq A,\mathrm{and}\ \Delta_q(\widehat{e_n})\leq A\big\}, \\
J&=\big\{n\in\N \,:\ |e_n(t)|\leq |\varphi(t)| \ \mathrm{and}\ |\widehat{e_n}(t)|\leq |\psi(t)| \ \hbox{ for a.e. } t \in \R
\big\},
\end{align*}
satisfy $\#I \leq M$ and $\# J \leq N$.
\end{theorem}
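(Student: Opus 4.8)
The plan is to combine the compactness furnished by Corollary \ref{cor:sh} with the quantitative cardinality bound of Lemma \ref{lem:quant}. The key observation is that $I$ is precisely the set of indices $n$ for which $e_n \in \kk_A^{p,q}$, and $J$ is precisely the set of indices $n$ for which $e_n \in \kk_{\varphi,\psi}$, where $\kk_A^{p,q}$ and $\kk_{\varphi,\psi}$ are the two sets shown to be compact in Corollary \ref{cor:sh}. Thus bounding $\#I$ and $\#J$ amounts to bounding how many members of a generating system can lie in a fixed relatively compact subset of $\ltwo$, which is exactly the content of Lemma \ref{lem:quant}.

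First I would bound $\#I$. Take $S = \kk_A^{p,q}$, which is compact, hence relatively compact, by Corollary \ref{cor:sh}. Applying Lemma \ref{lem:quant} with this $S$ and the given constants $C, D, s$ (using whichever of conditions (1) or (2) the system is assumed to satisfy) yields a threshold, which I call $M$. Since $S$ depends only on $A, p, q$, the constant $M$ depends only on $A, C, D, s, p, q$, as claimed. The bound $\#J \leq N$ is obtained identically by taking $S = \kk_{\varphi,\psi}$, which is compact by Corollary \ref{cor:sh} and depends only on $\varphi, \psi$; the resulting threshold $N$ then depends on $\varphi, \psi, C, D, s$, matching the statement.

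The only point requiring care is that Lemma \ref{lem:quant} is phrased in terms of the first $k$ elements $\{e_n\}_{n=1}^k$ of a sequence, whereas $I$ and $J$ are arbitrary index sets. To bridge this, I would observe that conditions (1) and (2) of Lemma \ref{lem:quant} are invariant under reordering the sequence: condition (2) is a statement about all pairs $\langle e_n, e_m\rangle$ and is manifestly permutation-invariant, while condition (1) refers only to the unordered closed span $H = \overline{span}\{e_n\}$ and to the quantity $\sum_n |\langle f, e_n\rangle|^s$, which is likewise independent of the ordering. Hence, given any finite subset $\{n_1, \ldots, n_k\} \subseteq I$, I would reorder $\{e_n\}_{n=1}^\infty$ so that $e_{n_1}, \ldots, e_{n_k}$ appear first; the reordered system still satisfies (1) or (2), and its first $k$ elements lie in $S = \kk_A^{p,q}$. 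Lemma \ref{lem:quant} then forces $k \leq M$, and since this holds for every finite subset of $I$ we conclude $\#I \leq M$. The same reordering argument with $S = \kk_{\varphi,\psi}$ gives $\#J \leq N$.

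No step presents a genuine obstacle: the entire analytic content is already carried by Corollary \ref{cor:sh} (compactness of the two concentration sets) and by Lemma \ref{lem:quant} (the packing bound against a relatively compact set), so the remaining work is purely the bookkeeping needed to recognize membership in $I$ and $J$ as containment in a relatively compact set and to reconcile the ``first $k$ elements'' formulation of Lemma \ref{lem:quant} with arbitrary index sets via reordering. If anything is delicate, it is merely confirming that the reordering preserves the normalization $\|e_n\|_2 = 1$ and conditions (1)/(2) verbatim, which it does trivially.
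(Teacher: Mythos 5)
Your proposal is correct and follows essentially the same route as the paper, which likewise deduces the theorem directly from Corollary \ref{cor:sh} (compactness of $\kk_A^{p,q}$ and $\kk_{\varphi,\psi}$) combined with Lemma \ref{lem:quant}; the reordering remark you add to reconcile arbitrary index sets with the ``first $k$ elements'' phrasing is a valid piece of bookkeeping that the paper leaves implicit.
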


For more quantitative bounds on $M,N$
in the cases of orthonormal bases and almost orthonormal Riesz bases, see \cite{JP}.
%%The approach in \cite{JP} using prolate spheroidal wavefunctions may also be used to address
%%Theorem \ref{th:210}, but the approach using compactness seems more direct for our current purposes.

Note that if $\{e_n\}_{n=1}^{\infty}$ satisfies
condition (1) or (2) of Lemma \ref{lem:quant}
then $\{f_n\}_{n=1}^{\infty}$ defined by $f_n(t) = e^{-2\pi i \mu t}e_n(t)$ with fixed $\mu \in \R$ also satisfies (1) or (2). Also,
$\mu_p(f_n) = \mu_p(e_n)$, $\mu_q(\widehat{f_n}) = \mu_q(\widehat{e_n})-\mu$, 
$\Delta(f_n) = \Delta(e_n)$, $\Delta(\widehat{f_n}) = \Delta(\widehat{e_n})$.
Consequently, for every $\mu\in\R$,
$$
\# \{ n \in \N: 
|\mu_p(e_n)|\leq A,\ \Delta_p(e_n)\leq A,\ |\mu_q(\widehat{e_n})-\mu |\leq A, \ \Delta_q(\widehat{e_n}) \leq A \} \leq M,$$
where $M$ is the constant from Theorem \ref{th:210}, and $M$ does not depend on $\mu$.
This implies the following corollary.

\begin{corollary} \label{cn-cor}
Fix $A>0$ and $p,q>1$.
Suppose that $\{e_n\}_{n=1}^{\infty} \subset \ltwo$ with $\|e_n\|_{2}=1$ satisfies either condition
(1) or (2) of Lemma \ref{lem:quant}, and that, for every $n\in\N$,
$$
|\mu_p(e_n)|\leq A, \ \ \Delta_p(e_n)\leq A, \ \ \Delta_q(\widehat{e_n})\leq A.
$$
There exists a constant $c>0$ such that  $|\mu_q(\widehat{e_n})| \geq cn$ holds for all $n\in\N$.
\end{corollary}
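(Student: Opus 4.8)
The plan is to read the desired lower bound off a counting estimate for the frequency means $\mu_q(\widehat{e_n})$, using precisely the modulation remark that precedes the statement. Under the three standing hypotheses $|\mu_p(e_n)|\le A$, $\Delta_p(e_n)\le A$ and $\Delta_q(\widehat{e_n})\le A$, three of the four constraints defining the set in Theorem~\ref{th:210} hold automatically for \emph{every} $n$, so the only effective constraint left is $|\mu_q(\widehat{e_n})-\mu|\le A$. Hence the modulation remark gives, with $M=M(A,C,D,s,p,q)$ independent of $\mu$,
\[
\forall\,\mu\in\R,\qquad \#\{n\in\N:\ |\mu_q(\widehat{e_n})-\mu|\le A\}\le M.
\]
In words: every interval of length $2A$ contains at most $M$ of the numbers $\mu_q(\widehat{e_n})$, counted with multiplicity in $n$.

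Next I would convert this into a bound on the number of \emph{small} frequency means. Covering $[-R,R]$ by at most $\tfrac{R}{A}+2$ intervals of length $2A$ (for instance the intervals $[\,2Aj-A,\,2Aj+A\,]$, $j\in\Z$, whose centres meet $[-R,R]$) and applying the previous estimate to each, one obtains
\[
\#\{n\in\N:\ |\mu_q(\widehat{e_n})|\le R\}\ \le\ M\Bigl(\tfrac{R}{A}+2\Bigr),
\]
a bound that is \emph{linear} in $R$. This is the crux of the matter: the frequency means cannot accumulate, and only $O(R)$ of them can lie in a ball of radius $R$.

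Finally I would reindex to turn this counting estimate into the per-index bound. Both conditions (1) and (2) of Lemma~\ref{lem:quant}, as well as the three standing hypotheses, are invariant under permuting the system, so without loss of generality I may enumerate $\{e_n\}_{n=1}^{\infty}$ so that $n\mapsto|\mu_q(\widehat{e_n})|$ is non-decreasing. Writing $R_n=|\mu_q(\widehat{e_n})|$, the first $n$ indices then all satisfy $|\mu_q(\widehat{e_m})|\le R_n$, so the displayed estimate forces $n\le M\bigl(R_n/A+2\bigr)$, that is $R_n\ge \tfrac{A}{M}\,n-2A$. Choosing $c>0$ suitably (e.g.\ $c=A/(2M)$) yields $|\mu_q(\widehat{e_n})|\ge c\,n$, which is the asserted linear growth.

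I expect the genuinely substantive input — the uniform-in-$\mu$ cardinality bound — to be already in hand from Theorem~\ref{th:210} together with the modulation invariance, so the remaining steps are essentially bookkeeping. The two points that require care are the covering count (arranging the length-$2A$ intervals so that $[-R,R]$ is met by at most $\tfrac{R}{A}+2$ of them) and the legitimacy of the reindexing, which rests on the permutation-invariance of conditions (1) and (2). The linear-with-offset form $R_n\ge\tfrac{A}{M}n-2A$ also makes clear why the clean bound $|\mu_q(\widehat{e_n})|\ge cn$ is the natural conclusion for large $n$: the finitely many initial terms, where several means may coincide or even vanish (for instance $\mu_q(\widehat{e_n})=0$ for even $e_n$ by Lemma~\ref{lemma:basicdisp}), are absorbed into the constant and play no role in the application.
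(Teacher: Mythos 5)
Your proposal is correct and follows exactly the route the paper intends: the paper gives no separate proof of Corollary \ref{cn-cor}, presenting it as an immediate consequence of the modulation remark preceding it (the uniform-in-$\mu$ bound $\#\{n\in\N:\ |\mu_q(\widehat{e_n})-\mu|\leq A\}\leq M$, which is precisely your starting point), and your covering of $[-R,R]$ by length-$2A$ intervals together with the monotone reindexing is the bookkeeping the authors leave implicit. One caveat, which is really a defect of the statement as printed rather than of your method: the linear-with-offset estimate $|\mu_q(\widehat{e_n})|\geq \frac{A}{M}n-2A$ cannot be upgraded to $|\mu_q(\widehat{e_n})|\geq cn$ for \emph{all} $n$ merely by shrinking $c$, because the hypotheses permit $\mu_q(\widehat{e_n})=0$ for some (at most $M$) indices --- for instance the modulates $\{e^{2\pi i mt}g(t)\}_{m\in\Z}$ of the Gaussian satisfy condition (2) and the three bounds, yet the $m=0$ element has vanishing frequency mean --- so your closing remark that such terms are ``absorbed into the constant'' does not hold for a purely multiplicative bound. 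What your counting argument does establish, and what is all that the application in Theorem \ref{mv-mainthm} actually uses, is $|\mu_q(\widehat{e_n})|\geq cn-C$, equivalently $|\mu_q(\widehat{e_n})|\geq cn$ for all sufficiently large $n$ after reindexing.
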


\section{One bounded mean and two bounded variances} \label{mvmain:sec}

The following lemma is a straightforward extension of Lemma 5.5 in \cite{Po}.
\begin{lemma} \label{innerprod-lem}
Fix $p,q>1$.  Suppose $f,g \in \ltwo$ and $\|f \|_{2}=\|g\|_{2}=1$
and that the 
dispersions
$\Delta_p(f), \Delta_q(\widehat{f}), \Delta_p(g), \Delta_q(\widehat{g})$ are all finite.  Then 
$$
|\scal{f,g}|\leq
\frac{ 2^{p/2} ( \Delta_p(f)+\Delta_p(g) ) +2^{q/2} ( \Delta_q(\widehat{f})+\Delta_q(\widehat{g}) )}{|\mu_p(f)-\mu_p(g)|^{p/2}+|\mu_q(\widehat{f})-\mu_q(\widehat{g})|^{q/2}}.
$$
\end{lemma}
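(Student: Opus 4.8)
The plan is to prove two separate near-orthogonality estimates — one governed by the time-means $\mu_p$ and time-dispersions $\Delta_p$, the other by the frequency-means $\mu_q(\widehat{\cdot})$ and frequency-dispersions $\Delta_q(\widehat{\cdot})$ — and then to combine them by a simple mediant inequality. Write $a=\mu_p(f)$, $b=\mu_p(g)$. First I would establish the time estimate
$$
|\mu_p(f)-\mu_p(g)|^{p/2}\,\abs{\scal{f,g}} \leq 2^{p/2}\bigl(\Delta_p(f)+\Delta_p(g)\bigr).
$$
Stated in this cleared-denominator form it holds trivially when $a=b$, so I may assume $a\neq b$ and split the integral $\scal{f,g}=\int f\ovl{g}\,\mbox{d}t$ at the midpoint $c=(a+b)/2$. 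On $\{t\leq c\}$ one has $|t-b|\geq |a-b|/2$, so inserting the factor $\bigl(2|t-b|/|a-b|\bigr)^{p/2}\geq 1$ into the integrand, applying Cauchy--Schwarz, and bounding the restricted integrals by the full ones, bounds $\bigl|\int_{t\leq c} f\ovl{g}\,\mbox{d}t\bigr|$ by $\tfrac{2^{p/2}}{|a-b|^{p/2}}\,\|f\|_2\,\Delta_p(g)$, using $\int_{\R}|t-b|^p|g|^2\,\mbox{d}t=\Delta_p^2(g)$ since $b=\mu_p(g)$. Symmetrically, on $\{t>c\}$ one has $|t-a|\geq|a-b|/2$, giving the bound $\tfrac{2^{p/2}}{|a-b|^{p/2}}\,\Delta_p(f)$. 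Adding the two halves and using $\|f\|_2=1$ yields the displayed time estimate.

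The frequency estimate is obtained by the identical argument applied to $\widehat{f},\widehat{g}$, after invoking Plancherel's identity $\scal{f,g}=\scal{\widehat{f},\widehat{g}}$. This produces
$$
|\mu_q(\widehat{f})-\mu_q(\widehat{g})|^{q/2}\,\abs{\scal{f,g}} \leq 2^{q/2}\bigl(\Delta_q(\widehat{f})+\Delta_q(\widehat{g})\bigr),
$$
with the exponent $p$ replaced by $q$ throughout and the splitting now performed at the midpoint of $\mu_q(\widehat{f})$ and $\mu_q(\widehat{g})$ on the frequency side.

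Finally I would add these two cleared-denominator inequalities. Writing $d_1=|\mu_p(f)-\mu_p(g)|^{p/2}$, $d_2=|\mu_q(\widehat{f})-\mu_q(\widehat{g})|^{q/2}$ and $X,Y$ for the two right-hand sides, the estimates read $d_1\abs{\scal{f,g}}\leq X$ and $d_2\abs{\scal{f,g}}\leq Y$; summing gives $(d_1+d_2)\abs{\scal{f,g}}\leq X+Y$, and dividing by $d_1+d_2>0$ is exactly the claimed bound. (If $d_1+d_2=0$, i.e. both mean-pairs coincide, the right-hand side of the lemma is $+\infty$ and there is nothing to prove.) The computations are routine; the only point requiring care is to phrase the two localization estimates in cleared-denominator form, so that they combine additively and so that the degenerate cases $a=b$ or $\mu_q(\widehat{f})=\mu_q(\widehat{g})$ never force a division by zero.
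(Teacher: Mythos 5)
Your proof is correct and is precisely the ``straightforward extension'' of Lemma 5.5 of \cite{Po} that the paper invokes without reproducing: the midpoint splitting of $\int f\ovl{g}\,\mathrm{d}t$, the insertion of the factor $\bigl(2|t-\mu|/|a-b|\bigr)^{p/2}\geq 1$ followed by Cauchy--Schwarz, Plancherel for the frequency half, and the cleared-denominator (mediant) combination are exactly the intended argument. The only cosmetic point is to fix the ordering (say $\mu_p(f)<\mu_p(g)$, without loss of generality by symmetry of the bound) before asserting $|t-b|\geq|a-b|/2$ on $\{t\leq c\}$.
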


We are now in position to prove the following theorem.

\begin{theorem}\label{mv-mainthm}
Fix $A,B,C>0$, $p,q>1$ and $r,s>0$.  
Let $\{e_n\}_{n=1}^{\infty} \subset \ltwo$ with $\|e_n\|_{2}=1$ satisfy 
$$
\forall f \in \ltwo, \ \ \ 
B\left( \sum_{n=1}^{\infty} | \langle f, e_n \rangle |^s \right)^{1/s} \leq 
\|f \|_{2} \leq C\left( \sum_{n=1}^{\infty} | \langle f, e_n \rangle |^r \right)^{1/r}.
$$
If $qr>2$ then it is not possible for $\{e_n\}_{n=1}^{\infty}$ to satisfy
$$
\forall n\in\N, \ \ \ 
|\mu_p(e_n)|\leq A,\ |\Delta_p({e_n})|\leq A, \ \mathrm{and}\ |\Delta_q(\widehat{e_n})|\leq A.
$$
\end{theorem}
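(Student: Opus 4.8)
The plan is to argue by contradiction, assuming the three sequences $\mu_p(e_n)$, $\Delta_p(e_n)$, $\Delta_q(\widehat{e_n})$ are all bounded by $A$. The first step is to observe that the lower inequality $B(\sum_n |\scal{f,e_n}|^s)^{1/s}\le\norm{f}_2$ is exactly condition (1) of Lemma \ref{lem:quant} with constant $1/B$, so Corollary \ref{cn-cor} applies and produces a constant $c>0$ with $|\mu_q(\widehat{e_n})|\ge cn$ for every $n$. Thus the frequency means escape to infinity while the time means stay bounded and both dispersion sequences stay bounded. The goal is then to exhibit a single test function $f$ whose coefficient sum $\sum_n |\scal{f,e_n}|^r$ is strictly smaller than $C^{-r}\norm{f}_2^r$, which contradicts the upper inequality $\norm{f}_2\le C(\sum_n|\scal{f,e_n}|^r)^{1/r}$.

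For the test function I would fix a Schwartz $\phi$ with $\norm{\phi}_2=1$ (so $\Delta_p(\phi)$, $\Delta_q(\widehat{\phi})$, $\mu_p(\phi)$, $\mu_q(\widehat{\phi})$ are all finite) and set $f_T(t)=\phi(t-T)$ for $T>0$. By Lemma \ref{lemma:basicdisp}, $\Delta_p(f_T)=\Delta_p(\phi)$ and $\mu_p(f_T)=\mu_p(\phi)+T$; since $|\widehat{f_T}|=|\widehat{\phi}|$ one also has $\Delta_q(\widehat{f_T})=\Delta_q(\widehat{\phi})$ and $\mu_q(\widehat{f_T})=\mu_q(\widehat{\phi})$, while $\norm{f_T}_2=1$. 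Feeding $f_T$ and $e_n$ into Lemma \ref{innerprod-lem}, the numerator is bounded, uniformly in both $n$ and $T$, by the constant $K_0=2^{p/2}(\Delta_p(\phi)+A)+2^{q/2}(\Delta_q(\widehat{\phi})+A)$, because no dispersion of $e_n$ exceeds $A$. The denominator I would keep split: it is at least $|\mu_p(f_T)-\mu_p(e_n)|^{p/2}$ and at least $|\mu_q(\widehat{\phi})-\mu_q(\widehat{e_n})|^{q/2}$.

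The core of the argument is a two-range estimate of $\sum_n |\scal{f_T,e_n}|^r$ split at a cutoff $N_0$. For the tail $n>N_0$ I would use the frequency part of the denominator: since $|\mu_q(\widehat{e_n})|\ge cn$, once $N_0$ exceeds a threshold depending on $\phi$ and $c$ one has $|\mu_q(\widehat{\phi})-\mu_q(\widehat{e_n})|\ge cn/2$, hence $|\scal{f_T,e_n}|\le K_0(2/c)^{q/2}n^{-q/2}$ and the tail is at most $K_0^r(2/c)^{qr/2}\sum_{n>N_0}n^{-qr/2}$. This is where $qr>2$ enters decisively: the exponent $qr/2$ exceeds $1$, so the series converges and the tail can be made smaller than $\tfrac12 C^{-r}$ by taking $N_0$ large. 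For the head $n\le N_0$ I would use the time part of the denominator: because $|\mu_p(e_n)|\le A$ while $\mu_p(f_T)=\mu_p(\phi)+T$, for $T$ large each of these finitely many terms obeys $|\scal{f_T,e_n}|\le K_0(T/2)^{-p/2}$, so the head is at most $N_0K_0^r(T/2)^{-pr/2}\to 0$ as $T\to\infty$. Choosing first $N_0$, then $T$, forces $\sum_n|\scal{f_T,e_n}|^r<C^{-r}=C^{-r}\norm{f_T}_2^r$, the desired contradiction.

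The main obstacle is organizing this limiting argument in the correct order: one must verify that $K_0$ is genuinely independent of $n$ and $T$, and must fix the cutoff $N_0$ (using the frequency escape and $qr>2$ to tame the infinitely many tail terms) \emph{before} letting $T\to\infty$ (using the bounded time means to tame the finitely many head terms). Conceptually, the mechanism is that bounded time means and dispersions force every $e_n$ to be concentrated near the origin in time, so a bump placed far out in time is nearly orthogonal to each $e_n$; the escape of the frequency means together with the bounded frequency dispersions supplies precisely the summable-in-$n$ decay needed to conclude that the entire system almost annihilates such a bump, which no system satisfying the upper inequality can tolerate.
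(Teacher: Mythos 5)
Your proposal is correct and follows essentially the same route as the paper: invoke Corollary \ref{cn-cor} to get $|\mu_q(\widehat{e_n})|\geq cn$, test the upper inequality against a far-translated bump (the paper uses the Gaussian $g_N=g(\cdot-N)$), bound each $|\langle g_N,e_n\rangle|$ via Lemma \ref{innerprod-lem} with a numerator uniform in $n$ and $N$, and use $qr>2$ to make $\sum_n|\langle g_N,e_n\rangle|^r\to 0$. Your explicit head/tail split at $N_0$ is just the $\varepsilon$-version of the dominated-convergence step the paper states in one line, so the two arguments are substantively identical.
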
 

\begin{proof}
By Corollary \ref{cn-cor} there is a constant $c>0$ such that $|\mu(\widehat{e}_n)|\geq c n$
holds for all $n \in \N$.

Let $g(x)=2^{1/4} e^{-\pi x^2}$ and $g_N(x)= g(x-N)$.  Note that $\|g\|_{2}=1$.  
By Lemma \ref{lemma:basicdisp} we have
$\mu_p(g_N) =\mu_p(g)+ N=N$, $\mu_q(\widehat{g_N}) =  \mu_q(\widehat{g})=0$,
 $\Delta_p(g_N) = \Delta_p(g)$, $\Delta_q(\widehat{g_N}) = \Delta_q(\widehat{g})$. 
By Lemma \ref{innerprod-lem}, for $N>A$
$$|\langle g_N, e_n \rangle | \leq 
\frac{2^{p/2} ( \Delta_p(g) + A) + 2^{q/2} (\Delta_q(\widehat{g}) + A)}
{|N- A|^{p/2}+|c n|^{q/2} }.$$
Hence, for an appropriate constant $c_1>0$
$$
1 =  \|g_N \|^{r}_{2} \leq C^r \sum_{n=1}^{\infty} | \langle g_N,e_n \rangle  |^r
\leq c_1 \sum_{n=1}^{\infty} \frac{1}{(|N-A|^{p/2}+|cn|^{q/2})^r}.$$
This gives a contradiction since the assumptions $qr>2$ and $p>0$ imply that
$$\lim_{N\to \infty} \sum_{n=1}^{\infty} \frac{1}{(|N-A|^{p/2}+|cn|^{q/2})^r}=0.$$
\end{proof}

The following corollaries are consequences of (\ref{frame-ineq}) and (\ref{gg-cond}) and 
extend (\ref{mvonb}). 

\begin{corollary} \label{cor-schauder}
Fix $A>0$, $p>1$.  If $\{e_n\}_{n=1}^{\infty}\subset \ltwo$ satisfies $\|e_n\|_{2}=1$ and
$$\forall n\in\N, \ \ \
|\mu_p(e_n)|\leq A,\ |\Delta_p({e_n})|\leq A, \ \ \mathrm{and} \ \ |\Delta_2(\widehat{e_n})|\leq A,$$
then $\{e_n\}_{n=1}^{\infty}$ cannot be a Schauder basis for $\ltwo$.
\end{corollary}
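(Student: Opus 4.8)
The plan is to obtain this statement as an immediate consequence of Theorem \ref{mv-mainthm}, using the Gurari\u{\i}--Gurari\u{\i} characterization of Schauder bases recorded in (\ref{gg-cond}). I would argue by contradiction. Suppose that $\{e_n\}_{n=1}^\infty$ is a unit-norm Schauder basis for $\ltwo$ satisfying the three stated bounds $|\mu_p(e_n)| \le A$, $|\Delta_p(e_n)| \le A$, and $|\Delta_2(\widehat{e_n})| \le A$ for all $n$. Since $\{e_n\}_{n=1}^\infty$ is a unit-norm Schauder basis, the Gurari\u{\i}--Gurari\u{\i} theorem supplies exponents $1 < r \le 2 \le s < \infty$ and constants $0 < B, C < \infty$ for which the two-sided coefficient inequality (\ref{gg-cond}) holds. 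This is precisely the $\ell^r$/$\ell^s$ stability hypothesis demanded by Theorem \ref{mv-mainthm}.

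It then remains only to match the parameters. The dispersion hypothesis of the corollary involves the standard dispersion $\Delta_2(\widehat{e_n})$, so I would apply Theorem \ref{mv-mainthm} with $q = 2$. The single condition to verify is $qr > 2$. Here the decisive feature of (\ref{gg-cond}) is that the Gurari\u{\i}--Gurari\u{\i} exponent satisfies $r > 1$ strictly; consequently $qr = 2r > 2$. With $p$ as given, $q = 2$, and $r, s$ furnished by (\ref{gg-cond}), all hypotheses of Theorem \ref{mv-mainthm} are met, so the three bounds $|\mu_p(e_n)| \le A$, $|\Delta_p(e_n)| \le A$, $|\Delta_2(\widehat{e_n})| \le A$ cannot all hold---contradicting our assumption.

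The main point to notice is that there is no genuinely difficult step remaining once Theorem \ref{mv-mainthm} is in hand; the entire analytic content has already been carried out there. The only subtlety worth flagging is the strictness $r > 1$: if the Gurari\u{\i}--Gurari\u{\i} theorem merely guaranteed $r \ge 1$, then one could have $r = 1$, giving $qr = 2$ and leaving the borderline condition $qr > 2$ unfulfilled when $q = 2$. It is exactly the strict inequality $r > 1$ guaranteed by (\ref{gg-cond}) for unit-norm Schauder bases that closes the argument.
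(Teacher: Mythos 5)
Your proposal is correct and matches the paper's intended argument exactly: the corollary is stated as an immediate consequence of Theorem \ref{mv-mainthm} together with the Gurari\u{\i}--Gurari\u{\i} condition (\ref{gg-cond}), applied with $q=2$ and using the strict inequality $r>1$ to secure $qr>2$. You have correctly identified the one subtlety (strictness of $r>1$) that makes the borderline condition work.
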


\begin{corollary} \label{cor-frame}
Fix $A>0$, $p,q>1$.  If $\{e_n\}_{n=1}^{\infty}\subset \ltwo$ satisfies $\|e_n\|_{2}=1$ and
$$\forall n \in \N, \ \ \ 
|\mu_p(e_n)|\leq A,\ |\Delta_p({e_n})|\leq A, \ \ \mathrm{and} \ \ |\Delta_q(\widehat{e_n})|\leq A,$$
then $\{e_n\}_{n=1}^{\infty}$ cannot be a frame for $\ltwo$.
\end{corollary}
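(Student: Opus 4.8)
The plan is to obtain Corollary \ref{cor-frame} as an immediate specialization of Theorem \ref{mv-mainthm}, the only work being to recast the frame inequality (\ref{frame-ineq}) in the two-sided coefficient form required by that theorem. First I would recall that if $\{e_n\}_{n=1}^{\infty}$ is a frame for $\ltwo$ then by (\ref{frame-ineq}) there exist constants $0<\alpha,\beta<\infty$ with $\alpha\|f\|_2^2 \leq \sum_{n=1}^{\infty}|\langle f,e_n\rangle|^2 \leq \beta\|f\|_2^2$ for every $f\in\ltwo$. Taking square roots, the lower frame bound yields $\beta^{-1/2}\bigl(\sum_{n=1}^{\infty}|\langle f,e_n\rangle|^2\bigr)^{1/2} \leq \|f\|_2$, and the upper frame bound yields $\|f\|_2 \leq \alpha^{-1/2}\bigl(\sum_{n=1}^{\infty}|\langle f,e_n\rangle|^2\bigr)^{1/2}$.

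Putting these together, the frame satisfies exactly the hypothesis of Theorem \ref{mv-mainthm} with the choices $r=s=2$ and coefficient constants $B=\beta^{-1/2}$, $C=\alpha^{-1/2}$ (the dispersion bound there being the constant $A$ of the present corollary). Next I would verify the arithmetic side condition $qr>2$: since $r=2$, this reduces to $2q>2$, i.e. $q>1$, which is part of the hypothesis. Hence Theorem \ref{mv-mainthm} applies and asserts that the three sequences $\mu_p(e_n)$, $\Delta_p(e_n)$, $\Delta_q(\widehat{e_n})$ cannot all be bounded. This directly contradicts the assumed bounds $|\mu_p(e_n)|\leq A$, $|\Delta_p(e_n)|\leq A$, $|\Delta_q(\widehat{e_n})|\leq A$, so no such frame can exist. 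The argument runs in perfect parallel to Corollary \ref{cor-schauder}, where the Gurari\u{\i}--Gurari\u{\i} bound (\ref{gg-cond}) plays the role that (\ref{frame-ineq}) plays here.

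I do not expect a genuine obstacle, since all the analytic content is already packaged in Theorem \ref{mv-mainthm}; the corollary is the case $r=s=2$. The one point that deserves a careful phrasing is that Theorem \ref{mv-mainthm} requires the coefficient inequalities to hold for \emph{every} $f\in\ltwo$, so one must invoke that $\{e_n\}_{n=1}^{\infty}$ is a frame for all of $\ltwo$ (not merely for its closed linear span); this is exactly the standing assumption of the corollary. It is also worth noting explicitly that the hypothesis $q>1$ is what makes $qr=2q>2$ hold, so unlike the Schauder case there is no need to appeal to $r<2$ from (\ref{gg-cond}): the $\ell^2$-norm equivalence of the frame gives $r=2$ outright.
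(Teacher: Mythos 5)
Your proposal is correct and is exactly the argument the paper intends: the corollary is stated as an immediate consequence of the frame inequality (\ref{frame-ineq}), which after taking square roots gives the hypothesis of Theorem \ref{mv-mainthm} with $r=s=2$, so that $qr=2q>2$ follows from $q>1$. The paper does not write out these details, but your specialization (including the correct inversion of the frame constants) matches its intended route.
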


\section{Exact systems and time-frequency localization} \label{exact:sec}

In this section we shall show that the phenomenon described in Theorem \ref{mv-mainthm} does not
hold for exact systems in $\ltwo$.  We shall make use of the following example.

\begin{examplenum}\label{exam:exact}
Let $g(t)=2^{1/4}e^{-\pi t^2}$ and consider the Gabor system $\mathcal{G}(g,1,1) = \{g_{m,n} \}_{m,n\in \Z}$ defined by
$$\forall m,n, \in \Z, \ \ \  g_{m,n}(t) = e^{2\pi i m t} g(t-n).$$
The unit-norm system $\mathcal{G}_0 = \mathcal{G}(g,1,1) \backslash \{ g_{1,1} \}$ is an
exact system in $\ltwo$, see \cite{F}.
Moreover, $\mathcal{G}_0$ is neither a frame nor a Schauder basis for $\ltwo$.  

Further related examples and uncertainty principles involving exact Gabor systems can be found in \cite{ALS} and \cite{HP,NO}
respectively.
\end{examplenum}

The construction in the following theorem is inspired by \cite{Bo,You2}.
\begin{theorem} \label{exact-thm}  
Fix $p,q>1$ and let $g(t) = 2^{1/4}e^{-\pi t^2}$.  Given $\epsilon>0$, there exists a unit-norm
exact system $\{f_n\}_{n=1}^{\infty}$ in $L^2(\R)$ such that 
\begin{equation} \label{fourbnd}
\forall n \in \N, \ \ \
|\mu_p(f_n)| < \epsilon, \ \ |\mu_q(\widehat{f_n})| < \epsilon,
\ \ \Delta_p(f_n) < \Delta_p(g)+\epsilon, \ \ \Delta_q(\widehat{f_n}) < \Delta_q(\widehat{g}) + \epsilon,
\end{equation}
and such that
$\varphi(t) = \sup_{n\in \N}  |f_n(t)|$ and
$\psi(\xi) = \sup_{n\in \N}|\widehat{f_n}(\xi)|$ satisfy $\varphi, \psi \in L^2(\R).$
\end{theorem}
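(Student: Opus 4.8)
The plan is to realize $\{f_n\}$ as an exact system built on the exact Gabor scaffold $\mathcal{G}_0=\{h_k\}_{k=1}^{\infty}$ of Example \ref{exam:exact}, enumerated so that $h_1=g$. Each $f_n$ is a normalized superposition of two pieces: a near-origin \emph{anchor} $a_n$ carrying the time-frequency concentration, and a rapidly vanishing far-field correction $\delta_n h_n$ carrying the exactness. Concretely, take $a_n(t)=e^{2\pi i \theta_n t}g(t)$ with the $\theta_n$ pairwise distinct, $\theta_n\to 0$ and $|\theta_n|<\epsilon$, and set
\[
f_n=\frac{a_n+\delta_n h_n}{\|a_n+\delta_n h_n\|_2},
\]
where $\{\delta_n\}$ is a summable sequence of positive reals, chosen very small. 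Since $\|h_n\|_2=1$ and $\delta_n\to 0$, each $f_n$ is a small perturbation of the coherent state $a_n$, which already sits inside the prescribed window.

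The localization bounds are the routine part. As $g$ is even, Lemma \ref{lemma:basicdisp} gives $\mu_p(g)=\mu_q(\widehat{g})=0$; because $|a_n(t)|=|g(t)|$, the modulation leaves the time-mean at $0$ and both dispersions equal to those of $g$, while shifting the frequency-mean to $\theta_n$ with $|\theta_n|<\epsilon$. It then remains to show that $\delta_n h_n$ perturbs these quantities by less than $\epsilon$, and here I invoke the quantitative estimates inside the proof of Proposition \ref{prop:dispconv} (Steps I--IV) with base function $a_n$ and perturber $\delta_n h_n$: those bounds control $|\mu_p(f_n)-\mu_p(a_n)|$ and $\Delta_p(f_n)-\Delta_p(a_n)$ in terms of $\delta_n$ and the far-field moment $\int|t|^p|h_n(t)|^2\,dt$, which is finite since $h_n$ is a modulated translate of a Gaussian. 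Choosing $\delta_n$ small relative to $\int|t|^p|h_n(t)|^2\,dt+\int|\xi|^q|\widehat{h_n}(\xi)|^2\,d\xi$ forces all four bounds in (\ref{fourbnd}), the frequency analogues following from $\widehat{g}$ being Gaussian. Finally, assuming $\delta_n\le 1/2$ one has $|f_n(t)|\le 2\bigl(|g(t)|+\delta_n|h_n(t)|\bigr)$, so $\varphi=\sup_n|f_n|\le 2\bigl(|g|+\sum_n\delta_n|h_n|\bigr)$, and $\sum_n\delta_n\|h_n\|_2=\sum_n\delta_n<\infty$ places $\varphi\in L^2(\R)$; the identical argument with $|\widehat{h_n}|$ gives $\psi\in L^2(\R)$.

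The substantive point, and the \emph{main obstacle}, is exactness. Completeness is the easier half: since $\delta_n\neq 0$, the corrections recover each $h_n$ modulo the anchors, so the closed linear span of $\{f_n\}$ matches that of $\mathcal{G}_0$, which is dense, and a perturbation argument makes this precise. Minimality is delicate. A common anchor (all $a_n=g$) fails, because averaging the $f_n$ against the summable tails reconstructs $g$ and renders $f_1$ redundant; this is precisely why the anchors must be pairwise distinct. The remedy is to let the far-field tails dominate the anchor discrepancies, i.e. $|\theta_n|\ll\delta_n$, so that after subtracting the common Gaussian part the residual system $\{(a_n-g)+\delta_n h_n\}$ is a norm-small perturbation of $\{\delta_n h_n\}$ and thus inherits the minimality of the exact system $\mathcal{G}_0$; concluding minimality then reduces to a Paley--Wiener-type invertibility and biorthogonality statement in the spirit of \cite{You2,Bo}. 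The delicate balance to verify is that concentration forces $\delta_n$ small relative to the large far-field moments of $h_n$, whereas minimality forces $\delta_n$ large relative to $|\theta_n|$; both are simultaneously achievable because the $\theta_n$ may be taken arbitrarily small and summable, independently of the scaffold.
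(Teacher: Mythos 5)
Your localization and envelope estimates are sound and essentially the paper's: the paper also anchors every $f_n$ at the Gaussian and invokes Proposition \ref{prop:dispconv} to make the perturbation harmless, and the $\ell^1$ choice of the coupling coefficients gives $\varphi,\psi\in\ltwo$ exactly as you say. The genuine gap is in exactness, and it is not a technicality: as written, your system is complete but \emph{not} minimal. Since $\theta_n\to 0$ and $\delta_n\to 0$, you have $\|a_n+\delta_n h_n\|_2 f_n\to g$, so $g\in\overline{\spa}\{f_n: n\neq N\}$ for every $N$, and hence $r_n:=(a_n-g)+\delta_n h_n\in\overline{\spa}\{f_m: m\neq N\}$ for every $n\neq N$. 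Your own perturbation step makes $\{r_n\}_{n\geq1}$ a small perturbation of $\{\delta_n h_n\}_{n\geq1}$, which is \emph{complete} and minimal (it is all of $\mathcal{G}_0$ rescaled); consequently $\overline{\spa}\{r_n: n\neq N\}$ is the codimension-one kernel of the biorthogonal functional $r_N^{*}$, and adjoining $g$ to the generating set yields all of $\ltwo$ unless $r_N^{*}(g)=0$. But $r_1^{*}$ is a small perturbation of $\delta_1^{-1}h_1^{*}$ and $\langle g,h_1^{*}\rangle=\langle h_1,h_1^{*}\rangle=1$, so $r_1^{*}(g)\neq0$ and therefore $f_1\in\overline{\spa}\{f_n: n\geq2\}$. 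Making the anchors pairwise distinct with $|\theta_n|\ll\delta_n$ does not repair this; the defect is that your tail family is \emph{all} of $\mathcal{G}_0$, including $h_1=g$ itself, so the tails alone already span a space containing the common anchor. (Your completeness argument has a related soft spot: subtracting $g$ and dividing by $\delta_n$ recovers $h_n+\delta_n^{-1}(a_n-g)$, not $h_n$, so even completeness needs the perturbation lemma rather than the direct recovery you describe.)

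The paper's construction avoids all of this with an index shift rather than with perturbed anchors or Paley--Wiener machinery: enumerating $\mathcal{G}_0=\{e_n\}$ with $e_1=g$, it sets $f_n=(e_1+\alpha_n e_{n+1})/\|e_1+\alpha_n e_{n+1}\|_2$, so the anchor $e_1=g$ is \emph{never} used as a tail and the tails $\{e_{n+1}\}_{n\geq1}=\mathcal{G}_0\setminus\{g\}$ do not span $g$. Completeness is then immediate ($e_1$ is the limit of $\|e_1+\alpha_ne_{n+1}\|_2f_n$, and each $e_{n+1}$ is an exact finite linear combination of $f_n$ and $e_1$), and minimality follows because $\overline{\spa}\{f_n: n\neq N\}\subset\overline{\spa}\{e_m: m\neq N+1\}$, so $f_N$ lying in that span would force $e_{N+1}\in\overline{\spa}\{e_m: m\neq N+1\}$, contradicting the minimality of $\mathcal{G}_0$. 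If you want to salvage your version, the fix is the same: pair the $n$-th anchor with $h_{n+1}$ rather than $h_n$, at which point the distinct modulations $\theta_n$ become unnecessary and a single fixed anchor $g$ suffices.
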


\begin{proof} Note that the system $\mathcal{G}(g,1,1)$ from Example \ref{exam:exact} satisfies
$\|g\|_{2}=1$ and that $\mu_p(g_{m,n})=\mu_p(g)+n=n$, $\mu_q(\widehat{g_{m,n}}) =m$, $\Delta_p(g_{m,n}) =\Delta_p(g),$ and $ \Delta_q(\widehat{g_{m,n}}) = \Delta_q(g)$.
We may enumerate the system $\mathcal{G}_0$ from Example \ref{exam:exact} as
$\mathcal{G}_0 = \{ e_n \}_{n=1}^{\infty}$ so that $e_1=g_{0,0}=g.$ 
%%%and $|\mu_p(e_n)| \leq n$ and $|\mu_q(\widehat{e_n})|\leq n$ hold for all $n \in \N$.

Let $\{\alpha_n\}_{n=1}^{\infty}\subset \R$ satisfying $0<\alpha_n<1$ be a sequence to be specified below and define
$$
\forall n \in \N,  \ \ \ f_n=\frac{e_1+\alpha_n e_{n+1}}{\|e_1+\alpha_n e_{n+1}\|_{2}}.
$$
Note that
$$
\widehat{f_n} = 
\frac{\widehat{e_1} + \alpha_n \widehat{e_{n+1}}}{\|\widehat{e_1}+\alpha_n \widehat{e_{n+1}}\|_{2}}.
$$
Using Proposition \ref{prop:dispconv}, for each $n\in \N$ we select $\alpha_n$ sufficiently
small so that $0<\alpha_n<2^{-n}$ and such that
$$
| \mu_p(f_n)|=
| \mu_p(f_n) - \mu_p(e_1) | < \epsilon \ \ \ 
\hbox{ and } \ \ \ | \mu_q(\widehat{f_n})|=| \mu_q(\widehat{f_n}) - \mu_q(\widehat{e_1}) | < \epsilon,$$
and
$$\Delta_p(f_n) < \Delta_p(e_1) + \epsilon \ \ \ \hbox{ and } \ \ \ 
 \Delta_q(\widehat{f_n}) < \Delta_q(\widehat{e_1}) + \epsilon.$$
Further, since $1/2\leq1-2^{-n} <1-\alpha_n \leq \|e_1 + \alpha_n e_{n+1}\|_2$ we have
$|f_n| \leq 2(|e_1| + 2^{-n} |e_{n+1}|)$ and hence
$|\varphi| \leq 2|e_1|+2\sum_{n=1}^{\infty} 2^{-n} |e_{n+1}|.$
Thus $\|e_n\|_2=1$ gives
$$\|\varphi \|_2 \leq 2\|e_1\|_2+2\sum_{n=1}^{\infty} 2^{-n} \|e_{n+1}\|_2 \leq 4.$$
Hence $\varphi \in \ltwo$.  A similar computation shows that $\psi \in \ltwo$.

Finally, to show that $\{f_n\}_{n=1}^{\infty}$ is exact we must show that it is complete and minimal.  
First, note that for each $n \in \N$, $e_n \in \overline{span} \{f_n\}_{n=1}^{\infty}$.
Indeed, $\|e_1+\alpha_ne_{n+1}\|_2 f_n=e_1+\alpha_n e_{n+1}\to e_1$ as $n \to \infty$, and also for each $n \in \N$, $e_{n+1}=\frac{1}{\alpha_n}(\|e_1+\alpha_ne_{n+1}\|_2 f_n-e_1)$.
Thus, $\ltwo = \overline{span} \{e_n\}_{n=1}^{\infty} \subset
\overline{span} \{f_n\}_{n=1}^{\infty},$ and $\{f_n\}_{n=1}^{\infty}$ is complete in $\ltwo$.

To see that $\{f_n\}_{n=1}^{\infty}$ is minimal suppose that
$f_N \in \overline{span} \{f_n: n \neq N \}.$
Since, as above, $e_1 \in \overline{span } \{f_n : n > N \}$, it would follow that
$e_{N+1} \in \overline{span} \{f_n : n \neq N\}$.  However this contradicts the
minimality of $\{e_n\}_{n=1}^{\infty}$ since
$\overline{span} \{f_n : n \neq N\} \subset \overline{\rm span} \{e_n : n \neq (N+1)\}$
and $e_{N+1} \notin \overline{span} \{e_n : n\neq (N+1)\}$.
\end{proof}

By Corollary \ref{cor-schauder} the exact system $\{f_n\}_{n=1}^{\infty}$ constructed in 
Theorem \ref{exact-thm} is not a Schauder basis.   This can also be seen using a direct computation
to show that $\{f_n\}_{n=1}^{\infty}$ does not have a basis constant.

\subsection*{Ackowledgments}  Ph. Jaming and A. Powell were both partially supported by the
ANR project AHPI {\it Analyse Harmonique et Probl\`emes Inverses}.

A. Powell was supported in part by NSF Grant DMS-0811086.
Portions of this work were completed during 
visits to the Universit\'e d'Orl\'eans (Orl\'eans, France),
the Academia Sinica Institute of Mathematics (Taipei, Taiwan),
and the City University of Hong Kong (Hong Kong, China).  This author is grateful
to these institutions for their hospitality and support.

The authors thank S. Nitzan and H. Shapiro for helpful discussions related to the material.

\bibliographystyle{amsplain}

\begin{thebibliography}{10}

\bibitem{ALS}
G.~Ascensi, Y.~Lyubarskii and K.Seip,
\textit{Phase space distribution of Gabor expansions},
Appl.\ Comput.\  Harmon.\ Anal. \textbf{26} (2009), 277--282.

\bibitem{BHW}
J.~J.~Benedetto, C.~Heil and D.~F.~Walnut,
\textit{Differentiation and the Balian-Low theorem}.
J. Four. Anal. Appl. \textbf{1} (1995), 355--402.

\bibitem{Bo}
J.~Bourgain, 
\textit{A remark on the uncertainty principle for Hilbertian basis},
J. Funct.\ Anal., \textbf{79} (1988), 136--143.

\bibitem{Ca}
P.~Casazza,
\textit{The art of frame theory},
Taiwanese J. Math. \textbf{4} (2000), 129--201.

\bibitem{Ch}
O.~Christensen,
\textit{An Introduction to Frames and Riesz Bases},
Birkh\"auser, Boston, 2003.

\bibitem{DFG}
M.~D\"orfler, K.~Gr\"ochenig, and H.~G.~Feichtinger,
\textit{Compactness criteria in function spaces},
Coll. Math. \textbf{94} (2002), 37--50.

\bibitem{DS}
N.~ Dunford and J.~T.~Schwartz,
\textit{Linear Operators, Part 1},
Wiley-Interscience, New York, 1966.

\bibitem{F}
G.~B.~Folland,
\textit{Harmonic Analysis on Phase Space},
Annals of Mathematics Studies, Princeton Univ.\ Press, Princeton, NJ, 1989. 

\bibitem{FS}
G.~B.~Folland and A.~Sitaram,
\textit{The uncertainty principle: A mathematical survey},
J. Fourier Anal. Appl. \textbf{3} (1997), 207--238.


\bibitem{HP}
C.~ Heil and A.~M.~Powell,
\textit{Regularity for complete and minimal Gabor systems on a lattice},
Illinois J. Math., to appear.

\bibitem{HOH}
H.~Hanche-Olsen and H.~Holden,
\textit{The Kolmogorov Riesz compactness theorem}, preprint (2009),
Expo. Math., to appear, available at http://arxiv.org/abs/0906.4883.

\bibitem{GG}
V.~I.~Gurari\u{\i} and N.~I.~Gurari\u{\i},
\textit{Bases in uniformly convex and uniformly smooth Banach spaces,} (Russian)
Izv. Akad. Nauk SSSR Ser. Mat. \textbf{35} (1971), 210--215. 

\bibitem{JP}
P.~Jaming and A.~M.~Powell,
\textit{Uncertainty principles for orthonormal sequences},
J. Funct. Anal. \textbf{243} (2007), 611--630.

\bibitem{Ko}
A.~N.~Kolmogorov,
\textit{ \"Uber Kompaktheit der Funktionenmengen bei der Konvergenz im Mittel},
Nachr. Ges. Wiss. G\"ottingen \textbf{9} (1931), 60--63.
%%English translation: \textit{On the compactness of sets of functions in the 
%%case of convergence in the mean}, in: {Selected Works of A.~N.~Kolmogorov, Volume I},
%%V.~M.~Tikhomirov ed., Kluwer, Dordrecht, 1991, 147--150.

\bibitem{Ma}
E. Malinnikova
\textit{Orthonormal Sequences in L2(Rd) and Time Frequency Localization},
Jour. Fourier. Anal. Appl. (to appear) available at DOI: 10.1007/s00041-009-9114-9
%Philippe: reference added

\bibitem{NO}
S.~Nitzan and J.-F.~Olsen,
\textit{From exact systems to Riesz bases in the Balian-Low theorem}, 
preprint (2009), available at http://arxiv.org/abs/0906.2302.

\bibitem{Pe}
R.~L.~Pego,
\textit{Compactness in $L^2$ and the Fourier transform}.
Proc. Amer. Math. Soc. \textbf{95} (1985), 252--254.

\bibitem{Po}
A.~M.~Powell,
\textit{Time-frequency mean and variance sequences of orthonormal bases},
Jour. Fourier. Anal. Appl. \textbf{11} (2005), 375--387.


\bibitem{Ri}
M.~Riesz,
\textit{Sur les ensembles compacts de fonctions sommables},
Acta Szeged Sect. Math. \textbf{6} (1933), 136--142.
%%Also in L. G\r{a}rding, L. H\"ormander (eds.), Marcel Riesz Collected Papers, Springer, Berlin (1988).

\bibitem{Ru}
W.~H.~Ruckle
\textit{The extent of the sequence space associated with a basis},
 Canad. J. Math. \textbf{24} (1972), 636--641.

\bibitem{Sh}
H.~S.~Shapiro,
\textit{Uncertainty principles for bases in $L^2(\R)$},
unpublished manuscript (1991).

\bibitem{Ta}
J.~D.~Tamarkin,
\textit{On the compactness of the space $L^p$},
Bull. Amer. Math. Soc. \textbf{32} (1932), 79--84.


\bibitem{You}
R.~Young,
\textit{An Introduction to Nonharmonic Fourier Series}, Revised first ed.,
Academic Press, San Diego, 2001.

\bibitem{You2}
R.~Young,
\textit{On complete biorthogonal systems},
Proc. Amer. Math. Soc. \textbf{83} (1981), 537--540.


\bibitem{Y}
K.~Yoshida,
\textit{Functional Analysis}, Second ed.,
Springer Verlag, New York, 1968.

\end{thebibliography}

\end{document}